%
%
%
%
%
\RequirePackage{fix-cm}
\documentclass[smallextended]{svjour3}       
\smartqed  
\usepackage{graphicx}
%
%
\usepackage[utf8]{inputenc}
\usepackage{dsfont,xspace,amsmath,amssymb}
\usepackage[marginclue,langtrack]{fixme} 
\usepackage[autostyle=true]{csquotes}
%
\newcommand{\Rbb}{\ensuremath{\mathds{R}}}
\newcommand{\set}[1]{\ensuremath{\left\{#1\right\}}}
\newcommand{\setval}[2]{\set{#1, \ldots, #2}}
\newcommand{\eg}{e.\,g.\xspace}%
\newcommand{\Nbb}{\ensuremath{\mathds{N}}}
\newcommand{\Ocal}{\ensuremath{\mathcal{O}}}
\newcommand{\ie}{i.\,e.\xspace}%
\DeclareMathOperator{\Proj}{Proj}
\DeclareMathOperator{\argmin}{argmin}

%
\journalname{Mathematical Programming}
\begin{document}

\title{The quadratic assignment problem: the linearization of Xia and Yuan is weaker than the linearization of Adams and Johnson
}
\subtitle{and a family of cuts to narrow the gap}

\titlerunning{The QAP: a comparison of the linearizations of Xia\&Yuan and Adams\&Johnson}        

\author{Christine Huber \and
		Wolfgang~F.~Riedl
}


\institute{Wolfgang~F.~Riedl \at	
           Universität der Bundeswehr München\\
					 Professur für Statistik, insb. Risikomanagement\\
					 Werner-Heisenberg-Weg 39\\
					 D - 85577 Neubiberg\\
					 Germany\\
              \email{wf.riedl@unibw.de}        
}

\date{Received: date / Accepted: date}

\maketitle

\begin{abstract}
The quadratic assignment problem is a well-known optimization problem with numerous applications. A common strategy to solve it is to use one of its linearizations and then apply the toolbox of mixed integer linear programming methods. One measure of quality of a mixed integer formulation is the quality of its linear relaxation.

In this paper, we compare two linearizations of the quadratic assignment problem and prove that the linear relaxation of the linearization of Adams and Johnson is contained in the linear relaxation of the linearization of Xia and Yuan. We furthermore develop a Branch and Cut approach using the insights obtained in the proof that enhances the linearization of Xia and Yuan via a new family of cuts called $ab$-cuts.

\keywords{quadratic assignment problem\and linearization\and lift and project\and linear relaxation\and cutting planes}
\subclass{MSC 52-B05 \and MSC 05-C38}
\end{abstract}

\section{The quadratic assignment problem}

The quadratic assignment problem is a well-known optimization problem which has a long history, numerous applications and has obtained broad coverage in the literature (see, \eg, \cite{MR2267435} for a survey, or \cite{MR1924955,MR1290346,MR2238659,MR3084089,MR2979433,nyberg2014some,MR2373098,MR2913063,MR2314426} for recent approaches) since its introduction in \cite{MR0089106}. Its aim is to find an optimal assignment $\Phi: \setval{1}{n} \rightarrow\setval{1}{n}$ with respect to a quadratic objective function $\sum_{i,k = 1}^n p_{ik}d_{\Phi(i)\Phi(k)} +\sum_{i = 1}^np_id_{\Phi(i)}$, where $p_{ik}\in\Rbb, d_{jl}\in\Rbb\;\forall j,l\in\setval{1}{n}$ and $d_i\in\Rbb\;\forall i\in\setval{1}{n}$. A straightforward ILP formulation with non-linear objective function is presented in \eqref{eq:ilp1} -- \eqref{eq:ilp4} using binary variables $x_{ij}\in\set{0,1}\;\forall i,j\in\setval{1}{n}$ with $x_{ij}=1 \Leftrightarrow \Phi(i)=j\;\forall i,j\in\setval{1}{n}$.

\begin{align}
	\label{eq:ilp1}
	\min \sum_{i,j,k,l = 1}^n p_{ik}d_{jl}x_{ij}x_{kl} &+\sum_{i,j = 1}^np_id_jx_{ij}\\
	\text{s.t.}\quad\sum_{i=1}^n x_{ij}&= 1\quad\forall j\in\setval{1}{n}\\
	\sum_{j=1}^n x_{ij}&= 1\quad\forall i\in\setval{1}{n}\\
	\label{eq:ilp4}
	x_{ij}&\in\set{0,1}\quad\forall i,j\in\setval{1}{n}
\end{align}

For convenience, the underlying set of all permutations of the set $\setval{1}{n}$ will be summarized in the permutation matrix
\begin{align*}
	X_n = \Bigg\{x_{ij}\in\set{0,1}:&\sum_{i=1}^n x_{ij}= 1\quad\forall j\in\setval{1}{n}\\
	&\sum_{j=1}^n x_{ij}= 1\quad\forall i\in\setval{1}{n}\Bigg\}.
\end{align*}
If we relax the integrality requirement, we denote this by
\begin{align*}
	LX_n = \Bigg\{x_{ij}\in\Rbb_{\geq 0}:&\sum_{i=1}^n x_{ij}= 1\quad\forall j\in\setval{1}{n}\\
	&\sum_{j=1}^n x_{ij}= 1\quad\forall i\in\setval{1}{n}\Bigg\}.
\end{align*}
The objective function's coefficients will be denoted by matrices 
\begin{equation*}
	D = (d_{jl})_{j,l\in\setval{1}{n}}\text{ and } P = (p_{ik})_{i,k\in\setval{1}{n}}.
\end{equation*}

To solve the Quadratic Assignment Problem, a variety of methods can be used. One common approach is to use an equivalent linear formulation of the problem and then apply the vast toolbox of Mixed Integer Linear Programming methods. Many linearizations of the QAP have been proposed \cite{MR1290346,MR2238659,KAUFMAN1978207,MR3084089,MR2913063,MR2314426}. Two of these which are frequently used will be presented in more detail in Sec.~\ref{sec:linearizations}. In Sec.~\ref{sec:comparison} we will compare the quality of both linearizations with respect to their linear relaxation. We will use a Lift-And-Project approach to prove that one of them is superior to the other. Using the theoretical insights obtained in the proof, we will present $ab$-cuts in Sec.~\ref{sec:newInequalities}: a new family of valid inequalities for the quadratic assignment problem that strengthen the smaller linearization.

\begin{remark}[Notation]
	For the remainder of this paper, we will denote elements to be assigned by $i$ and $k$, which will be assigned to elements denoted by $j$ and $l$.
\end{remark}

\begin{remark}[Omitted combinations]
	Note that for the objective function 	
	\begin{equation*}
		\sum_{i,j,k,l = 1}^n p_{ik}d_{jl}x_{ij}x_{kl} +\sum_{i,j = 1}^np_id_jx_{ij},
	\end{equation*}
	where $x\in X_n$, 
	\begin{align*}
		x_{ij}x_{il} &= 0\quad\forall i,j,l\in\setval{1}{n}&
		x_{ij}x_{kj} &= 0\quad\forall i,j,k\in\setval{1}{n}
	\end{align*}
	We therefore assume $i\neq k$ and $j\neq l$ for all combinations $i,j,k,l \in\setval{1}{n}$ used in the following.
\end{remark}

\section{Linearizations of the QAP}
\label{sec:linearizations}
In the following section, linearizations of the quadratic assignment problem are presented. We will present two of them in detail: the linearization of Adams and Johnson and the linearization of Xia and Yuan. 

\subsection{The linearization of Adams and Johnson}

The linearization of Adams and Johnson was introduced in \cite{MR1290346}. It replaces the objective function's nonlinear term $x_{ij}x_{kl}$ by a new variable $y_{ijkl}$ and can be regarded as the result of a Lift-And-Project procedure. 

To ensure the relationship 
\begin{equation}
	y_{ijkl} = x_{ij}x_{kl}\quad\forall i,j,k,l\in\setval{1}{n},
	\label{eq:yxRelationship}
\end{equation}
three families of linear constraints are introduced. The resulting MILP has $\Ocal(n^4)$ continuous variables, $\Ocal(n^2)$ binary variables and $\Ocal(n^4)$ constraints.

\begin{align}
\allowdisplaybreaks
	\label{eq:aj1}
	\min\sum_{\substack{i,j,k,l = 1\\k\neq i, l\neq j}}^n p_{ik}d_{jl}y_{ijkl} & +\sum_{i,j = 1}^n p_id_jx_{ij}\\
	\label{eq:aj2}
	\text{s.t.}\quad \sum_{\substack{i=1\\i\neq k}}^n y_{ijkl}&=x_{kl}\quad\forall j,k,l \in\setval{1}{n}, j\neq l\\
	\label{eq:aj3}
	\sum_{\substack{j=1\\j\neq l}}^n y_{ijkl}&=x_{kl}\quad\forall i,k,l \in\setval{1}{n}, i\neq k\\
	\label{eq:aj4}
	y_{ijkl}&=y_{klij}\quad\forall i,j,k,l \in\setval{1}{n}, k\neq i, l\neq j\\
	\label{eq:aj5}
	y_{ijkl}&\geq 0\quad\forall i,j,k,l \in\setval{1}{n}, k\neq i, l\neq j\\
	\label{eq:aj6}
	x&\in X_n
\end{align}
For coefficient matrices $D, P\in\Rbb^{n\times n}$, we will denote \eqref{eq:aj1} -- \eqref{eq:aj6} by $AJ(D,P)$, and its linear relaxation where $x\in LX_n$ by $LAJ(D,P)$.

Albeit its increase in variable and constraint size, the linearization is often used due to its good bounds.

\subsection{The linearization of Xia and Yuan}

A different kind of linearization was introduced by \cite{MR2238659}, based on the previous work of \cite{KAUFMAN1978207}. By factorizing the nonlinear objective function, they substitute the term $x_{ij} \cdot \sum_{k,l = 1}^n p_{ik}d_{jl}x_{kl} = z_{ij}$, which yields the objective function $\sum_{i,j=1}^nz_{ij}$. 

To ensure the equation 
\begin{equation}
	\label{eq:XYSubstitution}
	x_{ij} \cdot \sum_{\substack{k,l = 1\\k\neq i, l\neq j}}^n p_{ik}d_{jl}x_{kl} = z_{ij}\quad\forall i,j\in\setval{1}{n}
\end{equation}
one uses two \enquote{Big-M} constraints with constants 
\begin{equation}
	u_{ij} = \max_{x\in X_{n-1}}\sum_{\substack{k,l = 1\\k \neq i, l\neq j}}^n p_{ik}d_{jl}x_{kl}\quad\forall i,j\in\setval{1}{n}
\label{eq:DefinitionU}
\end{equation}
and
\begin{equation}
	l_{ij} = \min_{x\in X_{n-1}}\sum_{\substack{k,l = 1\\k\neq i, l\neq j}}^n p_{ik}d_{jl}x_{kl}\quad\forall i,j\in\setval{1}{n}.
\label{eq:DefinitionL}
\end{equation}
Then 
\begin{equation*}
	z_{ij}\geq \sum_{\substack{k,l=1\\k\neq i, l\neq j}}^np_{ik}d_{jl}x_{kl}+u_{ij}(x_{ij}-1)\quad\forall i,j\in\setval{1}{n}
\end{equation*}
ensures the validity of \eqref{eq:XYSubstitution} if $x_{ij} = 1$. To strengthen the linearization's linear relaxation, \cite{MR2238659} additionally introduced 
\begin{equation*}
	z_{ij}\geq l_{ij}x_{ij}\quad\forall i,j \in\setval{1}{n}.
\end{equation*}
The resulting linearization only uses $\Ocal(n^2)$ continuous and binary variables as well as $\Ocal(n^2)$ constraints. It is therefore smaller than the linearization of Adams and Johnson, however its linear relaxation usually is also worse.
\begin{align}
	\label{eq:XY1}
	\min\sum_{i,j = 1}^nz_{ij} &+\sum_{i,j=1}^np_id_jx_{ij}\\
	\label{eq:XYlInequality}
	\text{s.t.}\quad z_{ij}&\geq l_{ij}x_{ij}\quad\forall i,j \in\setval{1}{n}\\
	z_{ij}&\geq \sum_{\substack{k,l=1\\k\neq i, l\neq j}}^np_{ik}d_{jl}x_{kl}+u_{ij}(x_{ij}-1)\quad\forall i,j \in\setval{1}{n}\\
	\label{eq:XY4}
	x&\in X_n
\end{align}
We denote \eqref{eq:XY1} -- \eqref{eq:XY4} for given coefficients $D, P\in\Rbb^{n\times n}$ by $XY(D,P)$, and the corresponding linear relaxation where $x\in LX_n$ by $LXY(D,P)$.

\subsection{Further linearizations}

Different other linearizations were proposed for the QAP. Many approaches substituted the quadratic term $x_{ij}x_{kl}$ by additional variables $x_{ijkl}$ and used different constraints to ensure equivalence of the obtained formulation with the original one, such as \cite{MR0152361} with
\begin{align*}
	\min\sum_{\substack{i,j,k,l = 1\\k\neq i, l\neq j}}^n p_{ik}d_{jl}y_{ijkl} & +\sum_{i,j = 1}^n p_id_jx_{ij}\\
	\text{s.t.}\quad \sum_{\substack{i,j,k,l = 1\\k\neq i, l\neq j}}^n y_{ijkl}&=n^2\\
	x_{ij}+x_{kl}-2y_{ijkl}&\geq 0\quad\forall i,j,k,l \in\setval{1}{n}, i\neq k,j\neq l\\
	y_{ijkl}&\in\set{0,1}\quad\forall i,j,k,l \in\setval{1}{n}, k\neq i, l\neq j\\
	x&\in X_n
\end{align*}
and \cite{MR678819} with
\begin{align*}
	\min\sum_{\substack{i,j,k,l = 1\\k\neq i, l\neq j}}^n p_{ik}d_{jl}y_{ijkl} & +\sum_{i,j = 1}^n p_id_jx_{ij}\\
	\text{s.t.}\quad \sum_{\substack{i,j = 1\\i\neq k, j\neq l}}^n y_{ijkl}&=n x_{kl}\quad\forall k,l\in\setval{1}{n}\\
	\sum_{\substack{k,l = 1\\k\neq i, l\neq j}}^n y_{ijkl}&=n x_{ij}\quad\forall i,j\in\setval{1}{n}\\
	0\leq y_{ijkl}&\leq 1\quad\forall i,j,k,l \in\setval{1}{n}, k\neq i, l\neq j\\
	x&\in X_n
\end{align*} and 
{\allowdisplaybreaks
\begin{align*}
	\min\sum_{\substack{i,j,k,l = 1\\k\neq i, l\neq j}}^n p_{ik}d_{jl}y_{ijkl} & +\sum_{i,j = 1}^n p_id_jx_{ij}\\
	\text{s.t.}\quad \sum_{\substack{i = 1\\i\neq k}}^n y_{ijkl}&=x_{kl}\quad\forall j,k,l\in\setval{1}{n}, l\neq j\\
	\sum_{\substack{j = 1\\j\neq l}}^n y_{ijkl}&=x_{kl}\quad\forall i,k,l\in\setval{1}{n}, k\neq i\\
	\sum_{\substack{k = 1\\k\neq i}}^n y_{ijkl}&=x_{kl}\quad\forall i,j,l\in\setval{1}{n}, l\neq j\\
	\sum_{\substack{l = 1\\l\neq j}}^n y_{ijkl}&=x_{kl}\quad\forall i,j,k\in\setval{1}{n}, k\neq i\\
	0\leq y_{ijkl}&\leq 1\quad\forall i,j,k,l \in\setval{1}{n}, k\neq i, l\neq j\\
	x&\in X_n
\end{align*}}

The linearization of Adams and Johnson presented above dominates these linearizations with respect to its linear relaxation (see \cite{MR1290346}). The Lift-And-Project concept used to obtain the linearization of Adams and Johnson can be applied further to obtain an even tighter linearization using six indices (see \cite{MR2314426}) with substitution $t_{ijklpq} = x_{ij}x_{kl}x_{pq}$.

Considering smaller linearizations, \cite{KAUFMAN1978207} introduced the first one equivalent to the linearization of Xia and Yuan without constraint \eqref{eq:XYlInequality}. Consequently, its linear relaxation was weak. This was improved by the aforementioned linearization of Xia and Yuan, who investigated different possible values for $l_{ij}$ and $u_{ij}$ and also proved a method to efficiently compute $l_{ij}$ and $u_{ij}$ when defined as in \eqref{eq:DefinitionU} and \eqref{eq:DefinitionL}.

\section{A comparison of both linearizations}
\label{sec:comparison}
In the following section, we will compare the two aforementioned linearizations of Adams and Johnson as well as Xia and Yuan with regard to the quality of their linear relaxations. To do so, we will use a Lift-And-Project approach as presented by \cite{MR2194735}. After a short recap of the process (for the corresponding proofs we refer to \cite{MR2194735}), we will apply it to the linearizations. Subsequently, we will prove this section's main result: the linear relaxation of the linearization of Adams and Johnson $LAJ(D,P)$ will be proven to be better than the one of the linearization of Xia and Yuan $LXY(D,P)$.

\subsection{Theory on Lift-And-Project to compare linear formulations}

Assume two linear formulations $P$ and $Q$ to the same problem, $P$ being defined on variables $z, x$ and $Q$ on variables $y, x$. If there is an affine relationship between variables $z$ and $y$ of the form $z = Ty + e$ with suitable matrix $T$ and vector $e$, we can compare both formulations. To compare both polyhedra, we can lift one of them (\eg $Q$) in a common variable space $(z, x, y)$ and then project it to the space of the second ($P$). 

Assume $x\in \Rbb^p, y\in\Rbb^q$ and $z\in\Rbb^m$ and let $X\subseteq \Rbb^p$. Let furthermore 
\begin{equation*}
	P = \set{(x,z)\in\Rbb^{p+m}: Ax + Bz \leq a, x\in X}
\end{equation*}
where $A\in\Rbb^{l_1\times p}, B\in\Rbb^{l_1\times m}, a\in\Rbb^{l_1}$ for some $l_1 \in\Nbb$ and 
\begin{equation*}
	Q = \set{(x,y)\in\Rbb^{p+q}: Cx + Dy = b, Ey = c, Fy \leq d, x\in X}
\end{equation*}
where $C\in\Rbb^{l_2\times p}, D\in\Rbb^{l_2\times q}, b\in\Rbb^{l_2}, E\in\Rbb^{l_3\times q}, c\in\Rbb^{l_3}, F\in\Rbb^{l_4\times q}, d\in\Rbb^{l_4}$ for $l_2,l_3,l_4\in\Nbb$ and assume that there exist $T\in \Rbb^{m\times q}, e\in\Rbb^m$ such that
\begin{equation*}
	z = Ty + e.
\end{equation*}

An extended formulation $Q^+$ of $Q$ in variables space $(x, y, z)$ is
\begin{align*}
	Q^+ = \left\{(x,y,z)\right.&\in\Rbb^{p+q+m}:\\
	-Ty + z&= e\\
	Cx + Dy&= b\\
	Ey&= c\\
	Fy&\leq d\\
	x&\left.\in X\right\}.
\end{align*}
To project $Q^+$ down into variable space $(x,z)$, we need the corresponding projection cone
\begin{align*}
	W = \left\{(v, w^1, w^2, w^3)\right.&\in\Rbb^{m+l_2+l_3+l_4}:\\
	vT&=w^1D + w^2 E + w^3 F\\
	w^3&\left.\geq 0\right\}.
\end{align*}

Then the projection of $Q^+$ into $(x,z)$ is 
\begin{align*}
	\Proj_{x,z}\left(Q^+\right) = \left\{(x,z)\right.&\in\Rbb^{p+m}:\\
	vz + w^1Cx&\leq ve + w^1b + w^2 c + w^3 d \quad\forall (v,w^1,w^2,w^3)\in W\\
	x&\left.\in X\right\}.
\end{align*}

\subsection{Comparison of $LAJ(D,P)$ and $LXY(D,P)$}

We can now apply the above theory to the two linearizations of the quadratic assignment problem. Let $D, P\in\Rbb^{n\times n}$ coefficient matrices, and consider
\begin{align*}
	LXY(D,P) = \left\{(x,z)\right.&\in \Rbb^{2n^2}:\\
	-z_{ij}+l_{ij}x_{ij}&\leq 0\quad\forall i,j \in\setval{1}{n}\\
	-z_{ij}+u_{ij}x_{ij}+\sum_{\substack{k,l=1\\k\neq i, l\neq j}}^np_{ik}d_{jl}x_{kl}&\leq u_{ij}\quad\forall i,j\in\setval{1}{n}\\
	x&\left.\in LX_n\right\}
\end{align*}
the linearization of Xia and Yuan and
\begin{align*}
	LAJ(D,P) = \left\{(x,y)\right.&\in\Rbb^{n^2+n^4}:\\
	\sum_{\substack{i=1\\i\neq k}}^ny_{ijkl}-x_{kl}&=0\quad\forall j,k,l\in\setval{1}{n}, j\neq l\\
	\sum_{\substack{j=1\\j\neq l}}^ny_{ijkl}-x_{kl}&=0\quad\forall i,k,l\in\setval{1}{n}, i\neq k\\
	y_{ijkl}-y_{klij}&=0\quad\forall i,j,k,l\in\setval{1}{n}, k\neq i, l\neq j\\
	-y_{ijkl}&\leq 0\quad\forall i,j,k,l\in\setval{1}{n}, k\neq i, l\neq j\\
	x&\left.\in LX_n\right\}
\end{align*}
the linearization of Adams and Johnson. From \eqref{eq:XYSubstitution} and \eqref{eq:yxRelationship} we can derive the linear relationship
\begin{equation}
\label{eq:zyzusammenhang}
	z_{ij} = \sum_{\substack{k,l=1\\k\neq i, l\neq j}}^np_{ik}d_{jl}y_{ijkl}\quad\forall i,j\in\setval{1}{n}
\end{equation}
and thus obtain an extended formulation of the linearization of Adams and Johnson (with dual multipliers in brackets)
\begin{align*}
\allowdisplaybreaks
	LAJ(D,P)^+ = \left\{(x,y,z)\right.&\in\Rbb^{2n^2+n^4}:\\
	z_{ij} - \sum_{\substack{k,l=1\\k\neq i, l\neq j}}^np_{ik}d_{jl}y_{ijkl}&=0\quad\forall i,j\in\setval{1}{n}&(\alpha_{ij})\\
	\sum_{\substack{i=1\\i\neq k}}^ny_{ijkl}-x_{kl}&=0\quad\forall j,k,l\in\setval{1}{n}, j\neq l&(\beta^1_{jkl})\\
	\sum_{\substack{j=1\\j\neq l}}^ny_{ijkl}-x_{kl}&=0\quad\forall i,k,l\in\setval{1}{n}, i\neq k&(\beta^2_{ikl})\\
	y_{ijkl}-y_{klij}&=0\quad\forall i,j,k,l\in\setval{1}{n}, k\neq i, j\neq l&(\gamma_{ijkl})\\
	-y_{ijkl}&\leq 0\quad\forall i,j,k,l\in\setval{1}{n}, k\neq i, j\neq l\\
	x&\left.\in LX_n\right\}
\end{align*}

Using denoted multipliers $\alpha_{ij}\in\Rbb\;\forall i,j\in\setval{1}{n}$, $\beta^1_{jkl}\in\Rbb\;\forall j,k,l\in\setval{1}{n}, j\neq l$, $\beta^2_{ikl}\in\Rbb\;\forall i,k,l\in\setval{1}{n}, i\neq k$ and $\gamma_{ijkl}\in\Rbb\;\forall i,j,k,l\in\setval{1}{n}, k\neq i, l\neq j$, we yield the projection cone
\begin{align*}
	W = \left\{(\alpha,\beta^1,\beta^2,\gamma)\right.&\in\Rbb^{n^4+2n^3+n^2}:\\
	p_{ik}d_{jl}\alpha_{ij}\leq \beta^1_{jkl} &\left.+ \beta^2_{ikl} + \gamma_{ijkl}-\gamma_{klij}\quad\forall i,j,k,l \in\setval{1}{n}, k\neq i, l\neq j\right\}.
\end{align*}
One can simplify this to
\begin{align}
	W = \left\{(\alpha,\beta^1\right.&,\beta^2,\gamma)\in\Rbb^{n^4+2n^3+n^2}:\notag\\
	\label{eq:proj_cone:1}
	p_{ik}d_{jl}\alpha_{ij}&\leq \beta^1_{jkl} + \beta^2_{ikl} + \gamma_{ijkl}\quad\forall i,j,k,l \in\setval{1}{n}, k\neq i, l\neq j\\
	\label{eq:proj_cone:2}
	\gamma_{ijkl}&\left.=-\gamma_{klij}\quad\forall i,j,k,l\in\setval{1}{n}, k\neq i, l\neq j\right\}
\end{align}
with resulting projection of the extended formulation to variable space $(x,z)$
\begin{align*}
	\Proj_{x,z}\left(LAJ(D,P)^+\right) = \left\{(x,z)\right.&\in\Rbb^{2n^2}:\\
	\sum_{i,j=1}^n\alpha_{ij}z_{ij} - \sum_{i,j=1}^n \left(\sum_{\substack{l=1\\l\neq j}}^n \beta^1_{lij} + \sum_{\substack{k=1\\k\neq i}}^n\beta^2_{kij}\right)x_{ij}&\leq 0\quad\forall (\alpha,\beta^1,\beta^2,\gamma)\in W\\
	x&\left.\in LX_n\right\}.
\end{align*}

One imminent question is now whether one can conclude that one of both linearizations has a tighter linear relaxation. And in fact, one can:
\begin{theorem}
	Let $D, P\in\Rbb^{n\times n}$ coefficient matrices, and denote by $LXY(D,P)$ the linear relaxation of the Xia-Yuan linearization, and by $LAJ(D,P)$ the linear relaxation of the Adams-Johnson linearization. Then
	\begin{equation*}
		\Proj_{x,z}\left(LAJ(D,P)^+\right) \subseteq LXY(D,P).
	\end{equation*}
	\label{theorem:AJbesserXY}
\end{theorem}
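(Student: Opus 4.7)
My plan is to establish $\Proj_{x,z}(LAJ(D,P)^+) \subseteq LXY(D,P)$ by a direct argument: since both polyhedra share the common constraint $x \in LX_n$, it suffices to show that for every $(x,y,z) \in LAJ(D,P)^+$ the projected pair $(x,z)$ satisfies both families of $LXY$-inequalities. The witness $y$ provides the marginal structure that I exploit in both arguments.

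For the lower-bound family $z_{i_0 j_0} \geq l_{i_0 j_0} x_{i_0 j_0}$, fix $(i_0,j_0)$ and consider the slice $\tilde y_{kl} := y_{i_0 j_0 k l}$ for $k \neq i_0$, $l \neq j_0$. Combining equations \eqref{eq:aj2} and \eqref{eq:aj3} with the symmetry \eqref{eq:aj4}, both the row sums $\sum_{l \neq j_0} \tilde y_{kl}$ and the column sums $\sum_{k \neq i_0} \tilde y_{kl}$ evaluate to $x_{i_0 j_0}$. Together with $\tilde y \geq 0$, this means that $\tilde y / x_{i_0 j_0}$ lies in $LX_{n-1}$ whenever $x_{i_0 j_0} > 0$. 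Since the assignment polytope is integer, $\min_{X_{n-1}} = \min_{LX_{n-1}}$, and the definition of $l_{i_0 j_0}$ then yields $z_{i_0 j_0} = \sum p_{i_0 k} d_{j_0 l} \tilde y_{kl} \geq l_{i_0 j_0} x_{i_0 j_0}$. The edge case $x_{i_0 j_0} = 0$ forces $\tilde y = 0$ and both sides vanish.

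For the upper-bound family $-z_{i_0 j_0} + u_{i_0 j_0} x_{i_0 j_0} + \sum p_{i_0 k} d_{j_0 l} x_{kl} \leq u_{i_0 j_0}$, I would instead consider the complementary matrix $\eta_{kl} := x_{kl} - y_{i_0 j_0 k l}$ for $k \neq i_0$, $l \neq j_0$. Non-negativity $\eta \geq 0$ follows from \eqref{eq:aj2} and $y \geq 0$; combining the marginals of $\tilde y$ with the row and column sums of $x$ yields $\sum_{l \neq j_0} \eta_{kl} = 1 - x_{k j_0} - x_{i_0 j_0}$ and $\sum_{k \neq i_0} \eta_{kl} = 1 - x_{i_0 l} - x_{i_0 j_0}$. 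Using the identity $\sum p_{i_0 k} d_{j_0 l} x_{kl} - z_{i_0 j_0} = \sum p_{i_0 k} d_{j_0 l} \eta_{kl}$, the target reduces to $\sum p_{i_0 k} d_{j_0 l} \eta_{kl} \leq u_{i_0 j_0}(1 - x_{i_0 j_0})$. Taking LP-dual multipliers $(\tilde\mu, \tilde\nu)$ for the max-LP defining $u_{i_0 j_0}$, so that $\tilde\mu_l + \tilde\nu_k \geq p_{i_0 k} d_{j_0 l}$ and $\sum \tilde\mu + \sum \tilde\nu = u_{i_0 j_0}$, and substituting the marginals of $\eta$ gives $\sum p_{i_0 k} d_{j_0 l} \eta_{kl} \leq u_{i_0 j_0}(1 - x_{i_0 j_0}) - \bigl(\sum_{l \neq j_0} \tilde\mu_l x_{i_0 l} + \sum_{k \neq i_0} \tilde\nu_k x_{k j_0}\bigr)$.

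The main obstacle is the residual correction term $\sum \tilde\mu_l x_{i_0 l} + \sum \tilde\nu_k x_{k j_0}$, which must be non-negative in order to obtain the clean bound. I would resolve this by exploiting the translation invariance of the dual, $\tilde\mu \to \tilde\mu + c$ and $\tilde\nu \to \tilde\nu - c$ (which preserves both feasibility and the objective), together with the standard QAP sign assumption $P, D \geq 0$: the dual constraint $\tilde\mu_l + \tilde\nu_k \geq p_{i_0 k} d_{j_0 l} \geq 0$ forces $\min_l \tilde\mu_l + \min_k \tilde\nu_k \geq 0$, which is precisely the compatibility condition allowing a shift of $(\tilde\mu, \tilde\nu)$ into the non-negative orthant. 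Then $\tilde\mu, \tilde\nu \geq 0$ and $x \geq 0$ force the correction to be non-negative. An alternative route, more aligned with the lift-and-project setup introduced above, is to construct explicit elements $(\alpha, \beta^1, \beta^2, \gamma) \in W$ from the same dual multipliers and combine them with the assignment equalities $\sum_i x_{ij} = 1$, $\sum_j x_{ij} = 1$; checking cone feasibility \eqref{eq:proj_cone:1}--\eqref{eq:proj_cone:2} across the various index patterns, under the antisymmetry of $\gamma$, is then the technically demanding bookkeeping step of that approach.
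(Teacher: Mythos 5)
Your argument is correct in substance but takes a genuinely different route from the paper. The paper never argues on the extended formulation directly: it exhibits, for each of the two Xia--Yuan inequalities, an explicit element $(\alpha,\beta^1,\beta^2,\gamma)$ of the projection cone $W$ (Propositions~\ref{prop:inequalityLContained} and~\ref{prop:inequalityUContained}), obtains the needed $\beta$-multipliers as optimal dual solutions of the assignment LPs defining $l_{ab}$ and $u_{ab}$ (Lemmas~\ref{lemma:ExistenceMultipliersLByDual} and~\ref{lemma:ExistenceMultipliersUByDual}), and then verifies \eqref{eq:proj_cone:1} by a case analysis over index patterns. You instead work primally: the slice $\tilde y_{kl}=y_{i_0j_0kl}$ has all marginals equal to $x_{i_0j_0}$ (via \eqref{eq:aj2}, \eqref{eq:aj3} and the symmetry \eqref{eq:aj4}), so $\tilde y/x_{i_0j_0}\in LX_{n-1}$ and the lower bound follows from integrality of the assignment polytope; the upper bound follows from the complementary matrix $\eta=x-\tilde y$ and the dual of the max-assignment LP for $u_{i_0j_0}$. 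Your version is shorter and makes the geometric reason for the containment transparent; the paper's cone-multiplier form is the one that feeds directly into the separation machinery and the $ab$-cuts of Section~\ref{sec:newInequalities}, which is why the authors take the longer road. Your lower-bound argument is essentially the primal shadow of Lemma~\ref{lemma:ExistenceMultipliersLByDual} and is complete as stated, including the edge case $x_{i_0j_0}=0$.

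The one point to flag is the sign hypothesis you invoke for the upper-bound family. Your correction term $\sum_{l}\tilde\mu_l x_{i_0l}+\sum_k\tilde\nu_k x_{kj_0}$ genuinely can be negative when the products $p_{i_0k}d_{j_0l}$ are allowed to be negative, and your shift into the nonnegative orthant requires $\min_l\tilde\mu_l+\min_k\tilde\nu_k\ge 0$, which you correctly derive from $p_{i_0k}d_{j_0l}\ge 0$. The theorem as stated admits arbitrary $D,P\in\Rbb^{n\times n}$, so formally you are proving less. However, this is a restriction the whole setup already needs implicitly: if some products are negative, $u_{ij}$ as defined in \eqref{eq:DefinitionU} need not dominate the partial sums $\sum_{k\neq i,l\neq j}p_{ik}d_{jl}x_{kl}$ arising when $x_{ij}=0$, so the Big-M inequality is not even valid for integral QAP solutions, and the claimed containment can fail (take $n=3$, all $p_{1k}d_{1l}=-1$, $x_{ij}=1/3$, $y_{ijkl}=1/6$: then $z_{11}=-2/3$ while the Xia--Yuan constraint forces $z_{11}\ge 0$). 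So your proof is correct in the standard nonnegative QAP setting, and it has the merit of exposing exactly where nonnegativity enters; you should state the hypothesis $p_{ik}d_{jl}\ge 0$ explicitly rather than describe it as an afterthought.
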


To prove Thm.~\ref{theorem:AJbesserXY} we will show that both defining inequalities of the Xia-Yuan linearization can be derived from the projected Adams-Johnson linearization. Therefore we conclude that the latter is tighter.

\begin{proposition}
	\label{prop:inequalityLContained}
	Let $a,b \in\setval{1}{n}$. Inequality
	\begin{equation*}
		z_{ab}\geq l_{ab}x_{ab}
	\end{equation*}
	with $l_{ab}$ as defined in \eqref{eq:DefinitionL} is part of the description of $\Proj_{x,z}\left(LAJ(D,P)^+\right)$.
\end{proposition}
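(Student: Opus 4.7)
The plan is to exhibit an explicit element $(\alpha,\beta^1,\beta^2,\gamma)$ of the projection cone $W$ whose associated projected inequality is precisely $-z_{ab}+l_{ab}x_{ab}\leq 0$. The bridge between the constant $l_{ab}$ and the dual multipliers is LP duality applied to the definition \eqref{eq:DefinitionL}: since $LX_{n-1}$ is integral by Birkhoff's theorem, $l_{ab}=\min_{\hat x\in LX_{n-1}}\sum_{k\neq a,l\neq b}p_{ak}d_{bl}\hat x_{kl}$, and the LP dual supplies an optimal pair $(u^*_k)_{k\neq a}$, $(v^*_l)_{l\neq b}$ satisfying $u^*_k+v^*_l\leq p_{ak}d_{bl}$ for all $k\neq a, l\neq b$ and $l_{ab}=\sum_{k\neq a}u^*_k+\sum_{l\neq b}v^*_l$.

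Next I construct the multipliers. Set $\alpha_{ab}=-1$ and $\alpha_{ij}=0$ otherwise, producing the desired $-z_{ab}$ term. Set $\beta^1_{jab}=-v^*_j$ for $j\neq b$, $\beta^2_{iab}=-u^*_i$ for $i\neq a$, and all other $\beta$-entries to zero. The projection-sum coefficient of $x_{ab}$ is then $-\sum_{l\neq b}\beta^1_{lab}-\sum_{k\neq a}\beta^2_{kab}=l_{ab}$, while the coefficient of every $x_{ij}$ with $(i,j)\neq (a,b)$ vanishes automatically since the only nonzero $\beta$-entries have last two indices equal to $(a,b)$. Finally, set $\gamma_{ijab}=u^*_i+v^*_j$ for $i\neq a, j\neq b$, extend by antisymmetry to $\gamma_{abkl}=-(u^*_k+v^*_l)$, and put $\gamma=0$ on all other tuples.

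It remains to verify the cone inequality \eqref{eq:proj_cone:1}, which splits into three cases. When $(i,j)=(a,b)$ (so $k\neq a, l\neq b$), the $\beta$-terms vanish and the inequality reduces to $-p_{ak}d_{bl}\leq-(u^*_k+v^*_l)$, i.e., exactly dual feasibility. When $(k,l)=(a,b)$ (so $i\neq a, j\neq b$), $\alpha_{ij}=0$ and the $\beta$-contributions $-u^*_i-v^*_j$ are cancelled by $\gamma_{ijab}=u^*_i+v^*_j$. In all remaining cases every multiplier entry is zero and the inequality is trivially $0\leq 0$; the antisymmetry \eqref{eq:proj_cone:2} is built into the construction. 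The main obstacle is the index-bookkeeping, in particular checking that the antisymmetric $\gamma$ is compatible with the two cases that constrain it and that no unintended contributions to $x_{ij}$-coefficients slip in via permutations of the $\beta$-indices; once this is arranged, the dual-feasibility inequality $u^*_k+v^*_l\leq p_{ak}d_{bl}$ is precisely what closes up the $(i,j)=(a,b)$ case and turns $l_{ab}$ into the $x_{ab}$-coefficient on the nose.
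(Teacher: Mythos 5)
Your proposal is correct and follows essentially the same route as the paper: exhibit an explicit element of the projection cone with $\alpha_{ab}=-1$, obtain the $\beta^1_{lab},\beta^2_{kab}$ from LP duality of the assignment problem defining $l_{ab}$ (the paper's Lemma~\ref{lemma:ExistenceMultipliersLByDual}), and absorb the cross terms into an antisymmetric $\gamma$. The only cosmetic difference is your choice $\gamma_{abkl}=-(u^*_k+v^*_l)$ instead of the paper's $\gamma_{abkl}=-p_{ak}d_{bl}$, which merely moves the use of dual feasibility from the case $(k,l)=(a,b)$ to the case $(i,j)=(a,b)$.
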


\begin{proof}
	The statement to prove is equivalent to the proposition that there exist $(\alpha, \beta^1, \beta^2, \gamma)\in W$ such that
	\begin{equation*}
		\sum_{i,j=1}^n\alpha_{ij}z_{ij} - \sum_{i,j=1}^n \left(\sum_{\substack{l=1\\l\neq j}}^n \beta^1_{lij} + \sum_{\substack{k=1\\k\neq i}}^n\beta^2_{kij}\right)x_{ij}\leq 0
	\end{equation*}
	is equivalent to
	\begin{equation*}
		z_{ab}\geq l_{ab}x_{ab}.
	\end{equation*}
	
	The multipliers in question are
	\begin{itemize}
		\item $\alpha_{ij} = 
			\begin{cases}
				-1&\text{if }(i,j) = (a,b)\\
				0&\text{otherwise}
			\end{cases}
			\quad\forall i,j \in\setval{1}{n}$
		\item $\beta^1_{lij} = 0\quad\forall l,i,j\in\setval{1}{n}, l\neq j, (i,j) \neq (a,b)$
		\item $\beta^2_{kij} = 0\quad\forall k,i,j\in\setval{1}{n}, k\neq i, (i,j) \neq (a,b)$
		\item $\gamma_{ijkl} = 
			\begin{cases}
				0&\text{if }(i,j) \neq (a,b), (k,l) \neq (a,b)\\
				-p_{ak}d_{bl}&\text{if }(i,j) = (a,b)\\
				p_{ai}d_{bj}&\text{if }(k,l) = (a,b)\\
			\end{cases}\quad
			\begin{aligned}
				\forall i,j,k,l\in\setval{1}{n}\\
				k\neq i, l\neq j
			\end{aligned}$
		\item $\beta^1_{lab}, \beta^2_{kab}$ where
			\begin{align}
			\label{eq:multipliersLByDual}
				\begin{aligned}
					-p_{ak}d_{bl}&\leq \beta^1_{lab}+\beta^2_{kab}\quad\forall k,l\in\setval{1}{n}, k\neq a, l\neq b\\
					-l_{ab}&=\sum_{\substack{l=1\\l\neq b}}^n\beta^1_{lab} + \sum_{\substack{k=1\\k\neq a}}^n\beta^2_{kab}
				\end{aligned}
			\end{align}
	\end{itemize}
	The existence of $\beta^1_{lab}, \beta^2_{kab}$ such that \eqref{eq:multipliersLByDual} is valid will be shown in Lemma~\ref{lemma:ExistenceMultipliersLByDual}. $(\alpha,\beta^1,\beta^2,\gamma)$ as defined are contained in the projection cone, as shown by verifying \eqref{eq:proj_cone:1} for all cases (equation \eqref{eq:proj_cone:2} is valid by inspection):
	\begin{enumerate}
		\item Let $(i,j)=(a,b), k\neq a, l\neq b$:
			\begin{equation*}
				p_{ak}d_{bl}\alpha_{ab} = -p_{ak}d_{bl} = 0 + 0 -p_{ak}d_{bl} = \beta^1_{bkl} + \beta^2_{akl} + \gamma_{abkl}
			\end{equation*}
		\item Let $(k,l) = (a,b), i\neq k, j\neq l$:
			\begin{equation*}
				p_{ia}d_{jb}\alpha_{ij} = 0 = -p_{ai}d_{bj} + p_{ai}d_{bj} \leq \beta^1_{jab} + \beta^2_{iab} + \gamma_{ijab}
			\end{equation*}
		\item Let $(i,j)\neq(a,b), (k,l) \neq (a,b)$:
			\begin{equation*}
				p_{ik}d_{jl}\alpha_{ij} = 0 = 0 + 0 + 0 = \beta^1_{jkl} + \beta^2_{ikl} + \gamma_{ijkl}
			\end{equation*}
	\end{enumerate}
	Furthermore the resulting inequality is equivalent to the desired constraint of the linearization of Xia and Yuan:
	\begin{align*}
		\sum_{i,j=1}^n\alpha_{ij}z_{ij} - \sum_{i,j=1}^n \left(\sum_{\substack{l=1\\l\neq j}}^n \beta^1_{lij} + \sum_{\substack{k=1\\k\neq i}}^n\beta^2_{kij}\right)x_{ij} & =\\
		-z_{ab} - \left(\sum_{\substack{l=1\\l\neq b}}^n \beta^1_{lab} + \sum_{\substack{k=1\\k\neq a}}^n\beta^2_{kab}\right)x_{ab} & =\\
		-z_{ab} + l_{ab}x_{ab} &\leq 0
	\end{align*}
\end{proof}

The existence of coefficients as required in \eqref{eq:multipliersLByDual} can be verified as follows.
\begin{lemma}
	\label{lemma:ExistenceMultipliersLByDual}
	Let $a,b\in\setval{1}{n}$ and $l_{ab}$ as defined in \eqref{eq:DefinitionL}. There exist multipliers $\beta^1_{lab}\in\Rbb\;\forall l\in\setval{1}{n}, l\neq b$ and $\beta^2_{kab}\in\Rbb\;\forall k\in\setval{1}{n}, k\neq a$ such that
	\begin{align*}
		-p_{ak}d_{bl}&\leq \beta^1_{lab}+\beta^2_{kab}\quad\forall k,l\in\setval{1}{n}, k\neq a, l\neq b\\
		-l_{ab}&=\sum_{\substack{l=1\\l\neq b}}^n\beta^1_{lab} + \sum_{\substack{k=1\\k\neq a}}^n\beta^2_{kab}
	\end{align*}
\end{lemma}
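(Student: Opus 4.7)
The plan is to recognize that $l_{ab}$ is by definition the optimal value of a linear assignment problem on the reduced index set, and then obtain the required multipliers as (negated) optimal dual variables of that LP.

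More precisely, I would first rewrite
\[
    l_{ab} = \min_{x \in X_{n-1}} \sum_{\substack{k,l = 1\\ k \neq a,\, l \neq b}}^n p_{ak} d_{bl} \, x_{kl}
\]
as a linear program. Since the assignment polytope is integral (its constraint matrix is totally unimodular, equivalently Birkhoff--von Neumann), minimizing over $X_{n-1}$ is equivalent to minimizing over its LP relaxation $LX_{n-1}$: the constraints $\sum_{l \neq b} x_{kl} = 1$ for $k \neq a$, $\sum_{k \neq a} x_{kl} = 1$ for $l \neq b$, and $x_{kl} \geq 0$. Hence $l_{ab}$ is the optimal value of this linear program.

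Next, I would write down the LP dual, introducing a variable $u_k$ for each row constraint ($k \neq a$) and a variable $v_l$ for each column constraint ($l \neq b$). The dual reads
\[
    l_{ab} \;=\; \max \Bigl\{ \sum_{k \neq a} u_k + \sum_{l \neq b} v_l \;:\; u_k + v_l \leq p_{ak} d_{bl} \ \forall k \neq a,\ l \neq b \Bigr\}.
\]
Strong LP duality (applicable since the primal is feasible and bounded) yields an optimal dual solution $(u^*, v^*)$ attaining $\sum_{k} u^*_k + \sum_{l} v^*_l = l_{ab}$.

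Finally, I set $\beta^2_{kab} := -u^*_k$ for $k \neq a$ and $\beta^1_{lab} := -v^*_l$ for $l \neq b$. The dual feasibility inequality $u^*_k + v^*_l \leq p_{ak} d_{bl}$ immediately gives $-p_{ak} d_{bl} \leq \beta^1_{lab} + \beta^2_{kab}$, and the optimality equation gives $\sum_{l \neq b} \beta^1_{lab} + \sum_{k \neq a} \beta^2_{kab} = -l_{ab}$, as required. There is no real obstacle here beyond identifying the correct LP whose dual produces the multipliers; the integrality of the assignment polytope is what legitimates passing from $X_{n-1}$ to its LP relaxation, and it is the only nontrivial ingredient.
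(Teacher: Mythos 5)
Your proposal is correct and follows essentially the same route as the paper: identify $l_{ab}$ as the optimal value of a linear assignment problem, use total unimodularity to pass to the LP relaxation, invoke strong duality to obtain an optimal dual solution with value $l_{ab}$ satisfying the complementary inequalities, and negate it to produce the required multipliers. The paper phrases the last step as an ``inversion of the dual solution,'' which is exactly your sign flip $\beta^1_{lab} = -v^*_l$, $\beta^2_{kab} = -u^*_k$.
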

\begin{proof}
	Let $a,b\in\setval{1}{n}$ and consider the following linear program with associated dual variables:
	\begin{align*}
		\min \sum_{\substack{k,l=1\\k\neq a, l\neq b}}^n p_{ak}d_{bl}x_{kl}&\\
		\text{s.t.}\quad\sum_{\substack{k=1\\k\neq a}}^nx_{kl}&=1\quad\forall l\in\setval{1}{n}, l\neq b &(\beta^1_{lab})\\
		\sum_{\substack{l=1\\l\neq b}}^nx_{kl}&=1\quad\forall k\in\setval{1}{n}, k\neq a &(\beta^2_{kab})\\
		x_{kl}&\geq 0\quad\forall k,l\in\setval{1}{n}, k\neq a, l\neq b
	\end{align*}
	Its dual has the form
	\begin{align*}
		\max \sum_{\substack{l=1\\l\neq b}}^n\beta^1_{lab} + \sum_{\substack{k=1\\k\neq a}}^n\beta^2_{kab} &\\
		\text{s.t.}\quad\beta^1_{lab} + \beta^2_{kab}&\leq p_{ak}d_{bl}\quad\forall k,l\in\setval{1}{n}, k\neq a, l\neq b
	\end{align*}
	The primal problem is a linear assignment problem with total unimodular matrix, thus there exists an optimal integral solution with value (by definition)~$l_{ab}$. The corresponding optimal dual solution therefore satisfies
	\begin{align*}
		\sum_{\substack{l=1\\l\neq b}}^n\beta^1_{lab} + \sum_{\substack{k=1\\k\neq a}}^n\beta^2_{kab}& = l_{ab}\\
		\quad\beta^1_{lab} + \beta^2_{kab}&\leq p_{ak}d_{bl}\quad\forall k,l\in\setval{1}{n}, k\neq a, l\neq b
	\end{align*}
	An inversion of the dual solution yields the required result.
\end{proof}

The first inequality of the linearization of Xia and Yuan has been shown to be deducable from the projection of Adams and Johnson. For the second inequality the same result can be shown.
\begin{proposition}
	\label{prop:inequalityUContained}
	Let $a,b \in\setval{1}{n}$. Inequality
	\begin{equation*}
		z_{ab}\geq \sum_{\substack{k,l=1\\k\neq a, l\neq b}}^np_{ak}d_{bl}x_{kl}+u_{ab}(x_{ab}-1)
	\end{equation*}
	is contained in the description of $\Proj_{x,z}\left(LAJ(D,P)^+\right)$.
\end{proposition}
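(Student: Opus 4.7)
Plan: I will mirror the strategy of Proposition~\ref{prop:inequalityLContained} and exhibit explicit multipliers $(\alpha,\beta^{1},\beta^{2},\gamma)\in W$ whose projected inequality, combined with the assignment identities of $LX_n$, reproduces the target $u$-inequality. As before, I keep $\alpha_{ab}=-1$ and $\alpha_{ij}=0$ otherwise, which produces the $-z_{ab}$ term in the projection.

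The target inequality contains the constant $-u_{ab}$, which cannot appear on the right-hand side of an inequality of the projection cone (all such inequalities have the form $\leq 0$). Using $\sum_{k=1}^{n}x_{kb}=1$ from $LX_n$ I rewrite $u_{ab}=u_{ab}\sum_{k}x_{kb}$, turning the target into
\[
-z_{ab}+\sum_{\substack{k,l=1\\k\neq a,\,l\neq b}}^{n}p_{ak}d_{bl}x_{kl}-u_{ab}\sum_{\substack{k=1\\k\neq a}}^{n}x_{kb}\leq 0.
\]
This fixes the target values $\sum_{l\neq j}\beta^{1}_{lij}+\sum_{k\neq i}\beta^{2}_{kij}$ that the chosen multipliers must satisfy for each $(i,j)$.

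Next I establish an analog of Lemma~\ref{lemma:ExistenceMultipliersLByDual} for $u_{ab}$: applying LP duality to the assignment LP defining $u_{ab}$ (a maximum, again exact by total unimodularity) yields $\hat\beta^{1}_{l}$ for $l\neq b$ and $\hat\beta^{2}_{k}$ for $k\neq a$ with $\hat\beta^{1}_{l}+\hat\beta^{2}_{k}\geq p_{ak}d_{bl}$ and $\sum_{l\neq b}\hat\beta^{1}_{l}+\sum_{k\neq a}\hat\beta^{2}_{k}=u_{ab}$. These populate the $\beta$-entries at x-indices of the form $(k,b)$, producing the required $-u_{ab}$ coefficient on $x_{kb}$. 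To obtain the $p_{ak}d_{bl}x_{kl}$ coefficients in the interior region $\{(k,l):k\neq a,\,l\neq b\}$, I additionally use $\beta^{1}_{bkl}$ and $\beta^{2}_{akl}$ summing to $-p_{ak}d_{bl}$ for each such $(k,l)$; the $\gamma$-multipliers are then chosen to provide the slack needed to reconcile each $W$-inequality with its mirror under the antisymmetry $\gamma_{ijkl}=-\gamma_{klij}$.

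The proof then closes with a case analysis verifying $(\alpha,\beta^{1},\beta^{2},\gamma)\in W$ over the combinations of whether $(i,j)$ and $(k,l)$ equal $(a,b)$, lie in the interior, or lie on the borders $\{a\}\times\setval{1}{n}$ or $\setval{1}{n}\times\{b\}$, followed by the direct computation that the projected inequality equals the rewritten target. The main obstacle is the construction of the multipliers themselves: unlike in Proposition~\ref{prop:inequalityLContained}, whose target involves only $x_{ab}$, the $u$-inequality has non-zero coefficients on $x_{kl}$ for every interior pair, forcing the $\beta$'s to be non-zero on a large portion of the index set, while the antisymmetric coupling of every $W$-inequality to its transpose forces $\beta^{1}$, $\beta^{2}$, and $\gamma$ to be balanced in tandem; the scaffolding for this balancing is provided by the LP-dual inequalities and complementary slackness at optimality of the $u_{ab}$-LP.
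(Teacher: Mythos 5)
Your high-level strategy is the same as the paper's (exhibit explicit multipliers in $W$, supply the needed $\beta$-blocks via LP duality on the assignment problem defining $u_{ab}$, verify cone membership by cases), and your observation that the constant $-u_{ab}$ must be absorbed into the $x$-coefficients via an $LX_n$ identity is exactly the right starting point. However, the specific decomposition you choose creates a genuine gap at the crucial step. You write $-u_{ab}=-u_{ab}\sum_{k\neq a}x_{kb}$ using only the \emph{column} identity, so the full weight $u_{ab}$ must be carried by the $\beta$-groups attached to the indices $(c,b)$, $c\neq a$, while the groups attached to $(a,d)$, $d\neq b$, must have total weight $0$ (their $x$-coefficient in the target is zero). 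Eliminating $\gamma$ from the cone (add each inequality \eqref{eq:proj_cone:1} to its mirror under \eqref{eq:proj_cone:2}) shows that membership in $W$ is equivalent to $p_{ik}d_{jl}(\alpha_{ij}+\alpha_{kl})\leq \beta^1_{jkl}+\beta^2_{ikl}+\beta^1_{lij}+\beta^2_{kij}$ for all admissible pairs; in particular, pairing a row group $(a,d)$ with a column group $(c,b)$ forces $\beta^1_{bad}+\beta^2_{cad}+\beta^1_{dcb}+\beta^2_{acb}\geq 0$, and pairing $(a,d)$ with interior indices forces further pairwise lower bounds whose sum over any matching must be reconciled with the zero total weight of the $(a,d)$-group. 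For the natural choice of dual multipliers $\hat\beta^1_l+\hat\beta^2_k\geq p_{ak}d_{bl}$ with $\sum\hat\beta^1+\sum\hat\beta^2=u_{ab}$ on the column groups, these conditions reduce to requirements such as $\hat\beta^1_d+\hat\beta^2_c\geq 0$, which dual feasibility does \emph{not} guarantee (the products $p_{ak}d_{bl}$ may be negative; $D,P$ are arbitrary real matrices here). Your proposal defers exactly this construction to ``the slack provided by the $\gamma$'s'' and to ``complementary slackness,'' but the $\gamma$'s cancel in the summed conditions above, so they cannot rescue an infeasible choice of $\beta$'s, and complementary slackness plays no role.

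The paper avoids this obstruction by splitting the constant symmetrically, $-u_{ab}=-\tfrac{u_{ab}}{2}\sum_d x_{ad}-\tfrac{u_{ab}}{2}\sum_c x_{cb}$, assigning weight $\tfrac{u_{ab}}{2}$ to \emph{every} row group $(a,d)$ and \emph{every} column group $(c,b)$ via one reindexed optimal dual solution of the assignment LP with halved objective $\tfrac{p_{ak}d_{bl}}{2}$ (Lemma~\ref{lemma:ExistenceMultipliersUByDual}), and placing $-\tfrac{p_{ai}d_{bj}}{2}$ on each of $\beta^1_{bij}$ and $\beta^2_{aij}$ for interior $(i,j)$ together with antisymmetric $\gamma$'s of magnitude $\tfrac{p}{2}$. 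With this balance every cross-condition becomes either an exact cancellation or precisely the halved dual-feasibility inequality $\hat\beta^1_l+\hat\beta^2_k\geq\tfrac{p_{ak}d_{bl}}{2}$. The factor $\tfrac12$ is therefore not cosmetic but the device that makes the row, column, and interior groups simultaneously feasible. To repair your argument you should either adopt this symmetric split or prove that your asymmetric assignment of weights admits feasible $\beta$'s for arbitrary real $D,P$ --- which, as it stands, you have not done and which appears to fail in general.
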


\begin{proof}
	As in the proof of Prop.~\ref{prop:inequalityLContained} the statement to prove is equivalent to the proposition that there exist $(\alpha, \beta^1, \beta^2, \gamma)\in W$ such that
	\begin{equation*}
		\sum_{i,j=1}^n\alpha_{ij}z_{ij} - \sum_{i,j=1}^n \left(\sum_{\substack{l=1\\l\neq b}}^n \beta^1_{lij} + \sum_{\substack{k=1\\k\neq a}}^n\beta^2_{kij}\right)x_{ij}\leq 0
	\end{equation*}
	is equivalent to
	\begin{equation*}
		z_{ab}\geq \sum_{\substack{k,l=1\\k\neq a, l\neq b}}^np_{ak}d_{bl}x_{kl}+u_{ab}(x_{ab}-1).
	\end{equation*}

	The multipliers are
	\begin{itemize}
		\item $\alpha_{ij} = 
			\begin{cases}
				-1&\text{if }(i,j) = (a,b)\\
				0&\text{otherwise}
			\end{cases}
			\quad\forall i,j \in\setval{1}{n}$
		\item $\beta^1_{lij} =
			\begin{cases}
				0&\text{if }(i,j) = (a,b)\\
				-\frac{p_{ai}d_{bj}}{2}&\text{if }i\neq a, j \neq b, l = b\\
				0&\text{if }i\neq a, j\neq b, l \neq b\\
				\text{see below}&i = a, j\neq b\\
				\text{see below}&i \neq a, j = b\\
			\end{cases}
			\quad\forall l,i,j\in\setval{1}{n}, l\neq j$
		\item $\beta^2_{kij} = 
			\begin{cases}
				0&\text{if }(i,j) = (a,b)\\
				-\frac{p_{ai}d_{bj}}{2}&\text{if }i\neq a, j \neq b, k = a\\
				0&\text{if }i\neq a, j\neq b, k \neq a\\
				\text{see below}&i = a, j\neq b\\
				\text{see below}&i \neq a, j = b\\
			\end{cases}
			\quad\forall k,i,j\in\setval{1}{n}, k\neq i$
		\item Let $d\neq b$ and $\beta^1_{lad}, \beta^2_{kad}$ where
			\begin{align}
				\label{eq:multipliersUByDual1}
				\begin{aligned}
					\frac{p_{ak}d_{bl}}{2}&\leq \beta^1_{lad}+\beta^2_{kad}\quad\forall k,l\in\setval{1}{n}, k\neq a, l\neq d\\
					\frac{u_{ab}}{2}&=\sum_{\substack{l=1\\l\neq d}}^n\beta^1_{lad} + \sum_{\substack{k=1\\k\neq a}}^n\beta^2_{kad}
				\end{aligned}
			\end{align}
		\item Let $c\neq a$ and $\beta^1_{lcb}, \beta^2_{kcb}$ where
			\begin{align}
				\label{eq:multipliersUByDual2}
				\begin{aligned}
					\frac{p_{ak}d_{bl}}{2}&\leq \beta^1_{lcb}+\beta^2_{kcb}\quad\forall k,l\in\setval{1}{n}, k\neq c, l\neq b\\
					\frac{u_{ab}}{2}&=\sum_{\substack{l=1\\l\neq b}}^n\beta^1_{lcb} + \sum_{\substack{k=1\\k\neq c}}^n\beta^2_{kcb}
				\end{aligned}
			\end{align}
		\item $\gamma_{ijkl} =
			\begin{cases}
				0&\text{if }(i,j) = (a,b)\\
				0&\text{if }(k,l) = (a,b)\\
				-\frac{p_{ai}d_{bj}}{2}&\text{if }k = a, l \neq b\\
				\frac{p_{ak}d_{bl}}{2}&\text{if }i = a, j \neq b\\
				-\frac{p_{ai}d_{bj}}{2}&\text{if }k \neq a, l = b\\
				\frac{p_{ak}d_{bl}}{2}&\text{if }i \neq a, j = b\\
				0&\text{otherwise}
			\end{cases}
			\quad\forall i,j,k,l\in\setval{1}{n}, k\neq i, l\neq j$
	\end{itemize}
	
	For the existence of multipliers for which \eqref{eq:multipliersUByDual1} and \eqref{eq:multipliersUByDual2} hold we refer to Lemma~\ref{lemma:ExistenceMultipliersUByDual}. $(\alpha,\beta^1,\beta^2,\gamma)$ as defined above are contained in the projection cone, as proven by checking constraints \eqref{eq:proj_cone:1} of the projection cone (requirement \eqref{eq:proj_cone:2} is valid by inspection):
	\begin{enumerate}
		\item Let $(i,j) = (a,b), k\neq a, l\neq b$:
			\begin{equation*}
				p_{ak}d_{bl}\alpha_{ab} = -p_{ak}d_{bl} = -\frac{p_{ak}d_{bl}}{2}-\frac{p_{ak}d_{bl}}{2} + 0 = \beta^1_{bkl} + \beta^2_{akl} + \gamma_{abkl}
			\end{equation*}
		\item Let $d \neq b$ and $(i,j) = (a,d), k\neq a, l\neq b, l\neq d$:
			\begin{equation*}
				p_{ak}d_{dl}\alpha_{ad} = 0 = 0 - \frac{p_{ak}d_{bl}}{2} + \frac{p_{ak}d_{bl}}{2} = \beta^1_{dkl} + \beta^2_{akl} + \gamma_{adkl}
			\end{equation*}
		\item Let $c\neq a$ and $(i,j) = (c,b), k\neq a, k\neq c, l\neq b$:
			\begin{equation*}
				p_{ck}d_{bl}\alpha_{cb} = 0 = -\frac{p_{ak}d_{bl}}{2} + 0 +\frac{p_{ak}d_{bl}}{2} = \beta^1_{bkl} + \beta^2_{ckl} + \gamma_{cbkl}
			\end{equation*}
		\item Let $(k,l) = (a,b), i\neq a, j\neq b$:
			\begin{equation*}
				p_{ia}d_{jb}\alpha_{ij} = 0 = 0 + 0 + 0 = \beta^1_{jab} + \beta^2_{iab} + \gamma_{ijab}
			\end{equation*}
		\item Let $d\neq b$ and $(k,l) = (a,d), i\neq a, j\neq b, j\neq d$:
			\begin{equation*}
				p_{ia}d_{jd}\alpha_{ij} = 0 = \frac{p_{ai}d_{bj}}{2} - \frac{p_{ai}d_{bj}}{2} \leq \beta^1_{jad} + \beta^2_{iad} + \gamma_{ijad}
			\end{equation*}
		\item Let $c\neq a$ and $(k,l) = (c,b), i\neq a, i\neq c, j\neq b$:
			\begin{equation*}
				p_{ic}d_{jb}\alpha_{ij} = 0 = \frac{p_{ai}d_{bj}}{2} - \frac{p_{ai}d_{bj}}{2} \leq \beta^1_{jcb} + \beta^2_{icb} + \gamma_{ijcb}
			\end{equation*}
		\item Let $i \neq a, j\neq b, k\neq a, l\neq b:$
			\begin{equation*}
				p_{ik}d_{jl}\alpha_{ij} = 0 = 0 + 0 + 0 = \beta^1_{jkl} + \beta^2_{ikl} + \gamma_{ijkl}
			\end{equation*}
	\end{enumerate}
	
	Furthermore
	\begin{align*}
		\sum_{i,j=1}^n\alpha_{ij}z_{ij} - \sum_{i,j=1}^n \left(\sum_{\substack{l=1\\l\neq j}}^n \beta^1_{lij} + \sum_{\substack{k=1\\k\neq i}}^n\beta^2_{kij}\right)x_{ij} & =\\
		-z_{ab} - 0 x_{ab} - \sum_{\substack{d=1\\d\neq b}}^n\left(\sum_{\substack{l=1\\l\neq d}}^n \beta^1_{lad} + \sum_{\substack{k=1\\k\neq a}}^n\beta^2_{kad}\right)x_{ad}&\\
		- \sum_{\substack{c=1\\c\neq a}}^n\left(\sum_{\substack{l=1\\l\neq b}}^n \beta^1_{lcb} + \sum_{\substack{k=1\\k\neq c}}^n\beta^2_{kcb}\right)x_{cb} - \sum_{\substack{i,j=1\\i\neq a, j\neq b}}^n\left(\sum_{\substack{l=1\\l\neq j}}^n \beta^1_{lij} + \sum_{\substack{k=1\\k\neq i}}^n\beta^2_{kij}\right)x_{ij}&=\\
		-z_{ab} + u_{ab}x_{ab} -\left(\frac{u_{ab}}{2}+\frac{u_{ab}}{2}\right)x_{ab}- \sum_{\substack{d=1\\d\neq b}}^n\left(\frac{u_{ab}}{2}\right)x_{ad}&\\
		 - \sum_{\substack{c=1\\c\neq a}}^n\left(\frac{u_{ab}}{2}\right)x_{cb} - \sum_{\substack{i,j=1\\i\neq a, j\neq b}}^n\left(-\frac{p_{ai}d_{bj}}{2}-\frac{p_{ai}d_{bj}}{2}\right)x_{ij}&=\\
		-z_{ab} + u_{ab}x_{ab} - \sum_{d=1}^n\left(\frac{u_{ab}}{2}\right)x_{ad}&\\
		 - \sum_{c=1}^n\left(\frac{u_{ab}}{2}\right)x_{cb} - \sum_{\substack{i,j=1\\i\neq a, j\neq b}}^n\left(-p_{ai}d_{bj}\right)x_{ij}&=\\
		-z_{ab}+u_{ab}x_{ab} - 2\left(\frac{u_{ab}}{2}\right)+\sum_{\substack{i,j=1\\i\neq a, j\neq b}}\left(p_{ai}d_{bj}\right)x_{ij}&\leq 0
	\end{align*}
\end{proof}

Again, coefficients as required in \eqref{eq:multipliersUByDual1} and \eqref{eq:multipliersUByDual2} exist.
\begin{lemma}
	\label{lemma:ExistenceMultipliersUByDual}
	Let $a,b\in\setval{1}{n}$, $d\neq b, c\neq a$ and consider $u_{ab}$ as defined in \eqref{eq:DefinitionU}. There exist $\beta^1_{lad}\in\Rbb\;\forall l\neq d$ and $\beta^2_{kad}\in\Rbb\;\forall k\neq a$ such that
	\begin{align*}
		\frac{p_{ak}d_{bl}}{2}&\leq \beta^1_{lad}+\beta^2_{kad}\quad\forall k,l\in\setval{1}{n}, k\neq a, l\neq d\\
		\frac{u_{ab}}{2}&=\sum_{\substack{l=1\\l\neq d}}^n\beta^1_{lad} + \sum_{\substack{k=1\\k\neq a}}^n\beta^2_{kad}
	\end{align*}
	There exist $\beta^1_{lcb}\in\Rbb\;\forall l\neq b$ and $\beta^2_{kcb}\in\Rbb\;\forall k\neq c$ such that
	\begin{align*}
		\frac{p_{ak}d_{bl}}{2}&\leq \beta^1_{lcb}+\beta^2_{kcb}\quad\forall k,l\in\setval{1}{n}, k\neq c, l\neq b\\
		\frac{u_{ab}}{2}&=\sum_{\substack{l=1\\l\neq b}}^n\beta^1_{lcb} + \sum_{\substack{k=1\\k\neq c}}^n\beta^2_{kcb}
	\end{align*}
\end{lemma}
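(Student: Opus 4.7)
The plan is to mirror the LP-duality argument in the proof of Lemma~\ref{lemma:ExistenceMultipliersLByDual}, now applied to a maximization variant of the assignment LP so as to target $u_{ab}$ rather than $l_{ab}$. The two claims of the lemma are symmetric — swapping the roles of rows and columns reduces one to the other — so I would carry out only the first in detail and invoke the analogous argument for the second.

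First I would write down the scaled assignment LP
\begin{align*}
\max \sum_{\substack{k,l=1\\k\neq a,\, l\neq d}}^n \frac{p_{ak}d_{bl}}{2}\,x_{kl}& \\
\text{s.t.}\quad \sum_{\substack{k=1\\k\neq a}}^n x_{kl} &= 1\quad\forall l\in\setval{1}{n},\, l\neq d\\
\sum_{\substack{l=1\\l\neq d}}^n x_{kl} &= 1\quad\forall k\in\setval{1}{n},\, k\neq a\\
x_{kl} &\geq 0,
\end{align*}
whose dual is exactly the minimization of $\sum_{l\neq d}\beta^1_{lad}+\sum_{k\neq a}\beta^2_{kad}$ subject to $\beta^1_{lad}+\beta^2_{kad}\geq \frac{p_{ak}d_{bl}}{2}$ for all $k\neq a$, $l\neq d$. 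Any optimal dual solution therefore already satisfies the inequality part of the lemma.

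Since the constraint matrix of this primal LP is totally unimodular, the primal admits an integer optimizer, and by strong LP duality the dual attains the same value at a feasible pair $(\beta^1_{\cdot ad},\beta^2_{\cdot ad})$. The crux is then to identify this common optimal value with $\frac{u_{ab}}{2}$: I would do so by relating the primal to the assignment problem defining $u_{ab}$ in~\eqref{eq:DefinitionU} through a bijective relabeling of the column index sets $\{l\neq d\}$ and $\{l\neq b\}$, and if the relation yields only an upper bound $\leq\frac{u_{ab}}{2}$, inflating a single $\beta^1_{lad}$ by a nonnegative amount to restore the equality without violating any $\beta^1+\beta^2\geq\frac{p_{ak}d_{bl}}{2}$ constraint. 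The second statement is handled by the same argument with rows and columns exchanged: keep the column exclusion $l\neq b$ and replace the row exclusion $k\neq a$ by $k\neq c$. The step I expect to be most delicate is precisely this identification of the LP optimum with $\frac{u_{ab}}{2}$, because the primal's column restriction $l\neq d$ is not literally the restriction $l\neq b$ appearing in the definition of $u_{ab}$; the rest of the argument is standard LP duality together with the routine inflation of one multiplier.
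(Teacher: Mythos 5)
Your overall strategy (assignment LP, total unimodularity, strong LP duality) matches the paper's, but you set the LP up on the wrong index set, and this creates a real gap at exactly the step you flag as delicate. You pose the maximization over assignments of $\{k:k\neq a\}$ to $\{l:l\neq d\}$ and then hope to identify its optimal value with $\frac{u_{ab}}{2}$ ``through a bijective relabeling of the column index sets.'' This cannot work as stated: the objective coefficient attached to column $l$ is $\frac{p_{ak}d_{bl}}{2}$, which depends on the label $l$ itself, so a bijection between $\{l\neq d\}$ and $\{l\neq b\}$ does not preserve the objective. Concretely, your LP must assign some $k_0$ to column $b$ (picking up the coefficient $d_{bb}$, which never occurs in the problem defining $u_{ab}$) and cannot use column $d$ (so the coefficients $d_{bd}$ are lost); the two optima therefore differ in general, and in either direction. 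Your fallback of inflating one $\beta^1_{lad}$ repairs the sum only when your LP's optimum is at most $\frac{u_{ab}}{2}$. If it exceeds $\frac{u_{ab}}{2}$ (which happens, e.g., when $d_{bb}$ is large and positive and $p_{ak}>0$), then by weak duality every dual-feasible $(\beta^1_{\cdot ad},\beta^2_{\cdot ad})$ has multiplier sum strictly greater than $\frac{u_{ab}}{2}$, and no adjustment can achieve equality. You would need an extra argument (e.g., $d_{bb}=0$ together with nonnegative data, so that the assignment using column $b$ at zero cost is dominated by one completing into $\{l\neq b\}$) to close this, and you give none.

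The paper sidesteps the identification problem entirely: it runs the duality argument on the index set $\{k\neq a\}\times\{l\neq b\}$ — the one appearing in \eqref{eq:DefinitionU} — so that the primal optimum equals $\frac{u_{ab}}{2}$ by definition, and then transports the optimal dual solution to the index set $\{l\neq d\}$ by the single substitution $\beta^1_{bad}:=\beta^1_{dab}$, keeping $\beta^1_{lad}:=\beta^1_{lab}$ for $l\notin\{b,d\}$ and $\beta^2_{kad}:=\beta^2_{kab}$ (and symmetrically in the row index for the second claim). This preserves the multiplier sum exactly and carries over every dual constraint with $l\neq b$ verbatim; only the single constraint at $l=b$, which involves $d_{bb}$ and is never invoked in the proof of Prop.~\ref{prop:inequalityUContained}, is left to the usual zero-diagonal convention. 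If you want to salvage your write-up, the minimal fix is to replace your primal by the paper's (columns $l\neq b$) and append the relabeling step at the end.
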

\begin{proof}
	Let $a,b\in\setval{1}{n}$ and consider the following linear program with associated dual variables:
	\begin{align*}
		\max \sum_{\substack{k,l=1\\k\neq a, l\neq b}}^n \frac{p_{ak}d_{bl}}{2}x_{kl}&\\
		\text{s.t.}\quad\sum_{\substack{k=1\\k\neq a}}^nx_{kl}&=1\quad\forall l\in\setval{1}{n}, l\neq b &(\beta^1_{lab})\\
		\sum_{\substack{l=1\\l\neq b}}^nx_{kl}&=1\quad\forall k\in\setval{1}{n}, k\neq a &(\beta^2_{kab})\\
		x_{kl}&\geq 0\quad\forall k,l\in\setval{1}{n}, k\neq a, l\neq b
	\end{align*}
	Its dual has the form
	\begin{align*}
		\min \sum_{\substack{l=1\\l\neq b}}^n\beta^1_{lab} + \sum_{\substack{k=1\\k\neq a}}^n\beta^2_{kab} &\\
		\text{s.t.}\quad\beta^1_{lab} + \beta^2_{kab}&\geq \frac{p_{ak}d_{bl}}{2}\quad\forall k,l\in\setval{1}{n}, k\neq a, l\neq b
	\end{align*}
	The primal problem is a linear assignment problem with total unimodular matrix, thus there exists an optimal integral solution with value (by definition)~$\frac{u_{ab}}{2}$. The corresponding optimal dual solution therefore satisfies
	\begin{align*}
		\sum_{\substack{l=1\\l\neq b}}^n\beta^1_{lab} + \sum_{\substack{k=1\\k\neq a}}^n\beta^2_{kab} & = \frac{u_{ab}}{2}\\
		\quad\beta^1_{lab} + \beta^2_{kab}&\geq \frac{p_{ak}d_{bl}}{2}\quad\forall k,l\in\setval{1}{n}, k\neq a, l\neq b
	\end{align*}
	For $d\neq b$ 
	\begin{align*}
		\beta^1_{lad} &= 
		\begin{cases}
			\beta^1_{lab}&\text{if }l\neq b\\
			\beta^1_{dab}&\text{if }l = b
		\end{cases}\quad\forall l\in\setval{1}{n},l\neq d\\
		\beta^2_{kad} &= \beta^2_{kab}\quad\forall k\in\setval{1}{n},k\neq a
	\end{align*}
	yields the required coefficients, for $c\neq a$
	\begin{align*}
		\beta^1_{lcb} &=\beta^1_{lab}\quad\forall l\in\setval{1}{n}l\neq b\\
		\beta^2_{kcb} &=
		\begin{cases}
			\beta^2_{kab}&\text{if }k\neq a\\
			\beta^2_{cab}&\text{if }k = a
		\end{cases}\quad\forall k\in\setval{1}{n},k\neq c
	\end{align*}
	is a valid choice.
\end{proof}

\begin{proof}[Thm.~\ref{theorem:AJbesserXY}]
	As both defining inequalities of the linearization of Xia and Yuan are contained in the projection of the linearization of Adams and Johnson by Prop.~\ref{prop:inequalityLContained} and \ref{prop:inequalityUContained}, the latter is contained in the former.
\end{proof}

To sharpen the result of Thm.~\ref{theorem:AJbesserXY} that the projected polytope of the relaxed linearization of Adams and Johnson is contained in the one of Xia and Yuan, we provide proof that the formulations are not equal.

\begin{theorem}
\label{theorem:AJwirklichbesserXY}
	Let $D, P\in\Rbb^{n\times n}$ coefficient matrices, and denote by $LXY(D,P)$ the linear relaxation of the Xia-Yuan linearization, and by $LAJ(D,P)$ the linear relaxation of the Adams-Johnson linearization. Recall the definition of~$l_{ij}$ in \eqref{eq:DefinitionL}
	\begin{equation*}
		l_{ij} = \min_{x\in X_{n-1}}\sum_{\substack{k,l=1\\k\neq i, l\neq j}}^np_{ik}d_{jl}x_{kl}\quad\forall i,j\in\setval{1}{n}
	\end{equation*}
	and denote by $x\in\argmin(l_{ij})$ a vector $x\in X_{n-1}$ minimizing the expression of $l_{ij}$.
	
	Assume coefficients $D, P$ have a structure such that there exist $a,b,c,d \in\setval{1}{n},a\neq c, b\neq d$ with
	\begin{enumerate}
		\item $x_{cd}=1\;\forall x\in\argmin(l_{ab})$ and
		\item $x_{ab}=0\;\forall x\in\argmin(l_{cd})$.
	\end{enumerate}
	
	Then
	\begin{equation*}
		\Proj_{x,z}\left(LAJ(D,P)^+\right) \subsetneq LXY(D,P).
	\end{equation*}
\end{theorem}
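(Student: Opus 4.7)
The plan is to exhibit a point $(x^*, z^*)\in LXY(D,P)$ that is not in $\Proj_{x,z}(LAJ(D,P)^+)$. I would take $x^*\in LX_n$ doubly stochastic with $x^*_{ab},x^*_{cd}>0$ (the barycenter $x^*_{ij}=1/n$ for all $i,j\in\setval{1}{n}$ is the natural first try, possibly refined by a local perturbation) and set $z^*_{ij}$ coordinatewise to the maximum of the two Xia--Yuan right-hand sides, so that $(x^*,z^*)\in LXY(D,P)$ automatically. The crucial feature is that at $(a,b)$ and $(c,d)$ the first Xia--Yuan inequality shall be the binding one, \ie\ $z^*_{ab}=l_{ab}x^*_{ab}$ and $z^*_{cd}=l_{cd}x^*_{cd}$. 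At $(a,b)$ this amounts to $\sum_{k\neq a,\,l\neq b}p_{ak}d_{bl}x^*_{kl}+u_{ab}(x^*_{ab}-1)\leq l_{ab}x^*_{ab}$; averaging the expression $\sum p_{ak}d_{bl}\pi_{kl}$ over $\pi\in X_{n-1}$ shows $\sum_{k\neq a,\,l\neq b}p_{ak}d_{bl}\leq (n-1)u_{ab}$, which makes the requirement easy to meet for the barycentric $x^*$ whenever $l_{ab}\geq 0$; the remaining cases are handled by a localized perturbation of $x^*$ in the rows and columns indexed by $a,b,c,d$.

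With $(x^*,z^*)$ in hand, I would argue by contradiction: suppose $(x^*,z^*)\in\Proj_{x,z}(LAJ(D,P)^+)$, and let $y$ witness this, so $z^*_{ij}=\sum_{k\neq i,\,l\neq j}p_{ik}d_{jl}y_{ijkl}$. Combining the equality constraints \eqref{eq:aj2} and \eqref{eq:aj3} with the symmetry \eqref{eq:aj4} and $y\geq 0$ shows that for every $(i,j)$ with $x^*_{ij}>0$, the rescaled matrix $\bar{y}^{ij}_{kl}:=y_{ijkl}/x^*_{ij}$ (indexed by $k\neq i,\,l\neq j$) is nonnegative with row- and column-sums all equal to $1$; that is, $\bar{y}^{ij}$ lies in $LX_{n-1}$ after the obvious index relabeling.

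Specializing to $(i,j)=(a,b)$ gives $z^*_{ab}/x^*_{ab}=\sum p_{ak}d_{bl}\bar{y}^{ab}_{kl}\geq l_{ab}$, and equality forces $\bar{y}^{ab}$ to be an optimal solution of the linear program whose value defines $l_{ab}$. Total unimodularity of the assignment constraint matrix (as already exploited in Lemma~\ref{lemma:ExistenceMultipliersLByDual}) presents $\bar{y}^{ab}$ as a convex combination of integer optima, each of which belongs to $\argmin(l_{ab})$ and, by Condition~1, has its $(c,d)$-entry equal to $1$. Hence $\bar{y}^{ab}_{cd}=1$ and $y_{abcd}=x^*_{ab}$. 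The mirror-image argument at $(c,d)$, invoking Condition~2, yields $y_{cdab}=0$. But \eqref{eq:aj4} demands $y_{abcd}=y_{cdab}$, forcing $x^*_{ab}=0$ and contradicting the construction.

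The main obstacle will be the opening step: guaranteeing that some admissible $x^*\in LX_n$ with $x^*_{ab},x^*_{cd}>0$ also makes both Big-M inequalities slack enough that the first Xia--Yuan inequality determines $z^*_{ab}$ and $z^*_{cd}$. Once that technical point is settled, the remainder is a short deduction driven by the shared LAJ variable $y_{abcd}=y_{cdab}$, and is in fact dual in spirit to the constructions used in Propositions~\ref{prop:inequalityLContained} and~\ref{prop:inequalityUContained}.
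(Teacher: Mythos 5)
Your proposal is essentially the paper's own proof: the paper takes exactly the barycentric point $x^\star_{ij}=\frac{1}{n}$ with $z^\star_{ij}=l_{ij}x^\star_{ij}$, observes via the affine link \eqref{eq:zyzusammenhang} that any preimage $y^\star$ must have its $(a,b)$- and $(c,d)$-slices lie in the respective (scaled) argmins, and concludes $y^\star_{abcd}>0=y^\star_{cdab}$, contradicting the symmetry constraint \eqref{eq:aj4}. The point you flag as the \enquote{main obstacle} --- checking that the Big-M inequality is slack at the chosen point so that $z^\star_{ij}=l_{ij}x^\star_{ij}$ is actually feasible for $LXY(D,P)$ --- is simply asserted without verification in the paper, so your extra care there, together with your explicit convex-decomposition argument for why the $y$-slice must be a mixture of integral minimizers, only makes the argument more complete than the published one.
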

\begin{proof}
	Let $D, P\in\Rbb^{n\times n}$ coefficient matrices such that there exist $a,b,c,d \in\setval{1}{n},a\neq c, b\neq d$ with the given assumptions. Consider the solution $(x^\star, z^\star)\in LXY(D,P)$ where $x^\star_{ij}=\frac{1}{n}\;\forall i,j\in\setval{1}{n}$ and $z^\star_{ij} = l_{ij}x^\star_{ij}\;\forall i,j\in\setval{1}{n}$ which is valid for $LXY(D,P)$. For a corresponding solution $(x^\star, y^\star)\in LAJ(D,P)$ that is to be projected to $(x^\star, z^\star)$, the affine transformation \eqref{eq:zyzusammenhang} demands
	\begin{equation*}
		\frac{1}{n}l_{ij} = \sum_{\substack{k,l=1\\k\neq i, l\neq j}}^np_{ik}d_{jl}y^\star_{ijkl}\quad\forall i,j\in\setval{1}{n},
	\end{equation*}
	in other words
	\begin{equation*}
		(y^\star_{ijkl})_{k,l \in\setval{1}{n}}\in\argmin(\frac{1}{n}l_{ij})\quad\forall i,j\in\setval{1}{n}.
	\end{equation*}
	Considering pairs $(a,b)$ and $(c,d)$ and the assumptions, we conclude $y^\star_{abcd} = 1$ and $y^\star_{cdab} = 0$,	which contradicts $(x^\star, y^\star)\in LAJ(D,P)$ due to \eqref{eq:aj4}.
\end{proof}

We see that the linearization of Xia and Yuan tries to represent the quadratic term of the objective function by fixing each assignment for itself and then estimating the impact on the remaining assignment. This, however, totally neglects the symmetry within the problem, which is still handled by the linearization of Adams and Johnson at the price of its $\Ocal(n^4)$ variables.

\begin{remark}
	The requirements of Thm.~\ref{theorem:AJwirklichbesserXY} are not strict. In fact, if they do not apply, the QAP is easy to solve as the solution space is reduced to $n$ possible solutions given by the assumptions, one of which is optimal.
\end{remark}

\begin{example}[Projected linearization of Adams and Johnson strictly contained in linearization of Xia and Yuan]
\label{example:strictly_contained}
	Let $n = 3$ and consider coefficient matrices
	\begin{align*}
		D &=
		\begin{pmatrix}
			0&1&2\\
			1&0&1\\
			2&1&0
		\end{pmatrix},&
		P &= \frac{1}{18}
		\begin{pmatrix}
			0&4&2\\
			3&0&3\\
			4&2&0
		\end{pmatrix}
	\end{align*}
	with 
	\begin{align*}
		l &= \frac{1}{18}
		\begin{pmatrix}
			9&6&9\\
			8&6&8\\
			8&6&8
		\end{pmatrix},&
		\argmin(l_{11})&=
		\set{\begin{pmatrix}
			1&0\\
			0&1
		\end{pmatrix}},&
		\argmin(l_{33})&=
		\set{\begin{pmatrix}
			0&1\\
			1&0
		\end{pmatrix}}.		
	\end{align*}
	
	The only vector $(x^\star, y^\star)$ to be projected onto $(x^\star, z^\star)\in LXY(D,P)$ where $x^\star_{ij} = \frac{1}{3}\;\forall i,j\in\setval{1}{3}$ and $z^\star_{ij} = l_{ij}x^\star_{ij}\;\forall i,j\in\setval{1}{3}$ has
	\begin{align*}
		(y^\star_{11kl})_{k,l\in\set{2,3}} &= \frac{1}{3}
		\begin{pmatrix}
			1&0\\
			0&1			
		\end{pmatrix},&
		(y^\star_{33kl})_{k,l\in\set{1,2}} &= \frac{1}{3}
		\begin{pmatrix}
			0&1\\
			1&0
		\end{pmatrix}
	\end{align*}
	and is not contained in $LAJ(D,P)$ as $y^\star_{1133}\neq y^\star_{3311}$.
\end{example}

\section{$ab$-cuts for the linearization of Xia and Yuan}
\label{sec:newInequalities}

One main questions arising from Thm.~\ref{theorem:AJbesserXY} and~\ref{theorem:AJwirklichbesserXY} is whether one can use the result to improve the solution performance of the linearization of Xia and Yuan. In practical experiments using a Branch and Bound approach, this linearization tends to provide a small formulation with fast solution times of each linear program in the search tree; however its weak linear relaxation yields many nodes to search. It is therefore not superior to the linearization of Adams and Johnson, which exhibits a bigger formulation and resulting slower solution of the linear programs; however this effect is countermanded by the reduced number of nodes to solve which originates in its better linear bound. In a context with limited computation time or with limited memory resources, an ILP formulation with reduced size that is capable to quickly evaluate many nodes with little memory consumption can be of interest.

In the following, we will therefore investigate whether the projected inequalities of the linearization of Adams and Johnson can be used to strengthen the linear relaxation of the linearization of Xia and Yuan. Let in the following $(x^\star, z^\star) \in LXY(D,P)$ denote values of the linear relaxation of Xia and Yuan. The separation problem for given $(x^\star, z^\star)\in LXY(D,P)$
\begin{equation*}
	\max_{(\alpha,\beta^1,\beta^2,\gamma)\in W}\sum_{i,j=1}^n\alpha_{ij}z^\star_{ij} - \sum_{i,j=1}^n \left(\sum_{l=1}^n \beta^1_{lij} + \sum_{k=1}^n\beta^2_{kij}\right)x^\star_{ij} 
\end{equation*}
is equivalent to verifying whether there exist $y_{ijkl}\in\Rbb^{n^4}$ such that the extended formulation of Adams and Johnson $LAJ(D,P)^+|_{x=x^\star,z=z^\star}$ with fixed variables $x, z$ is non-empty. If there exist $y_{ijkl}$, the separation problem's objective value is not greater than 0. If on the other hand there are no such variables $y_{ijkl}$, the separation problem is unbounded (\ie there is a violated inequality, and as $W$ is a cone the inequality can be scaled arbitrarily). 

As the separation on the complete projection cone therefore is equivalent to computing the solution of the linearization of Adams and Johnson, we turn to the investigation of subclasses of inequalities defined by the projection. A suitable class of inequalities with a corresponding efficient separation method might provide a means to strengthen the linear relaxation without incorporating the complete complexity of the linearization of Adams and Johnson. One class of inequalities reveals itself as a generalization of \eqref{eq:XYlInequality}, derived from the idea used in the proof of Prop.~\ref{prop:inequalityLContained}.
\begin{theorem}[$ab$-cuts]
\label{theorem:cuts}
	Let $a,b\in\setval{1}{n}$ and $(x^\star,z^\star)\in LXY(D,P)$ a solution of the relaxed linearization of Xia and Yuan. Consider a (not necessarily optimal) dual solution $(\beta^1_{lab}, \beta^2_{kab}, \delta_{kl})\in\Rbb^{2(n-1)+(n-1)^2}$ to
	\begin{align*}
		sep = \min \sum_{\substack{k,l=1\\k\neq a, l\neq b}}^n p_{ak}d_{bl}x_{kl}&\\
		\text{s.t.}\quad-\sum_{\substack{k=1\\k\neq a}}^nx_{kl}&=-x^\star_{ab}\quad\forall l\in\setval{1}{n}, l\neq b &(\beta^1_{lab})\\
		-\sum_{\substack{l=1\\l\neq b}}^nx_{kl}&=-x^\star_{ab}\quad\forall k\in\setval{1}{n}, k\neq a &(\beta^2_{kab})\\
		x_{kl}&\geq 0\quad\forall k,l\in\setval{1}{n}, k\neq a, l\neq b\\
		-x_{kl}&\geq -x^\star_{kl}\quad\forall k,l\in\setval{1}{n}, k\neq a, l\neq b&(\delta_{kl})
	\end{align*}
	We denote the dual objective value of $(\beta^1_{lab}, \beta^2_{kab}, \delta_{kl})$ by $sep_{dual}(\beta^1_{lab}, \beta^2_{kab}, \delta_{kl})$, and the optimal primal objective value by $sep$.
	
	Then $ab$-cut
	\begin{equation}
		\label{eq:newInequality}
		z_{ab}\geq -\left(\sum_{\substack{l=1\\l\neq b}}^n\beta^1_{lab} + \sum_{\substack{k=1\\k\neq a}}^n\beta^2_{kab}\right)x_{ab} - \sum_{\substack{k,l=1\\k\neq a, l\neq b}}^n \delta_{kl}x_{kl}\tag{$ab$-cut}
	\end{equation}
	is valid for the QAP. 
	
	Furthermore, the $ab$-cut is violated by $(x^\star,z^\star)$ if $sep_{dual}(\beta^1_{lab}, \beta^2_{kab}, \delta_{kl}) > z^\star_{ab}$, and there is no violated inequality if $sep \leq z^\star_{ab}$.
\end{theorem}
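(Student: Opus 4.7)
The plan is to exploit LP duality together with a short case analysis on the integer value of $x_{ab}$. First I would write down the dual of $sep$ explicitly. Since every occurrence of $x_{kl}$ in the primal equality rows carries coefficient $-1$ and the inequality constraints are of $\geq$-type in a minimization, the dual variable $\delta_{kl}$ is nonnegative, and the dual constraint attached to each primal variable reads
\begin{equation*}
	-\beta^1_{lab} - \beta^2_{kab} - \delta_{kl} \leq p_{ak}d_{bl}\quad\forall k,l\in\setval{1}{n},\; k\neq a,\; l\neq b.
\end{equation*}
Reading off the dual objective from the primal RHS gives
\begin{equation*}
	sep_{dual}(\beta^1,\beta^2,\delta) = -x^\star_{ab}\Bigl(\sum_{\substack{l=1\\l\neq b}}^n\beta^1_{lab}+\sum_{\substack{k=1\\k\neq a}}^n\beta^2_{kab}\Bigr) - \sum_{\substack{k,l=1\\k\neq a,l\neq b}}^n x^\star_{kl}\delta_{kl},
\end{equation*}
which is exactly the right-hand side of the $ab$-cut evaluated at $(x^\star,z^\star)$.

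Next I would prove validity of the $ab$-cut for the integer QAP. Fix $x\in X_n$ and set $z_{ab}=x_{ab}\sum_{k\neq a,l\neq b}p_{ak}d_{bl}x_{kl}$, as in the substitution underlying the linearization of Xia and Yuan. In the case $x_{ab}=0$ one has $z_{ab}=0$ while the right-hand side of the cut collapses to $-\sum\delta_{kl}x_{kl}$, which is nonpositive because $\delta_{kl}\geq 0$ and $x_{kl}\geq 0$, so the cut holds trivially. In the case $x_{ab}=1$ the restriction of $x$ to indices $k\neq a,\,l\neq b$ is a permutation matrix from $X_{n-1}$, so $\sum_{k\neq a}x_{kl}=1$ for every $l\neq b$ and $\sum_{l\neq b}x_{kl}=1$ for every $k\neq a$. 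Multiplying the dual constraint by the nonnegative $x_{kl}$ and summing over $k,l$ collapses the $\beta$-sums using these row and column identities and yields
\begin{equation*}
	-\sum_{\substack{l=1\\l\neq b}}^n\beta^1_{lab}-\sum_{\substack{k=1\\k\neq a}}^n\beta^2_{kab}-\sum_{\substack{k,l=1\\k\neq a,l\neq b}}^n\delta_{kl}x_{kl}\;\leq\;\sum_{\substack{k,l=1\\k\neq a,l\neq b}}^np_{ak}d_{bl}x_{kl}\;=\;z_{ab},
\end{equation*}
which is precisely the $ab$-cut at this $x$ (recall $x_{ab}=1$).

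Finally the separation part is essentially a restatement of LP weak duality. By the first paragraph the RHS of the $ab$-cut at $(x^\star,z^\star)$ is $sep_{dual}(\beta^1,\beta^2,\delta)$, so the cut is violated exactly when $z^\star_{ab}<sep_{dual}(\beta^1,\beta^2,\delta)$. Conversely, weak duality gives $sep_{dual}(\beta^1,\beta^2,\delta)\leq sep$ for every dual feasible triple, so the assumption $sep\leq z^\star_{ab}$ forces every $ab$-cut in this family to be satisfied by $(x^\star,z^\star)$. The main (mild) obstacle is bookkeeping with signs: one must check carefully that the $-1$'s in the primal equality rows flip the direction of the dual inequality and that $\delta_{kl}\geq 0$ is forced by the primal inequality direction. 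Once these conventions are fixed, the key observation driving validity is that $x_{ab}=1$ turns the complementary block of $x$ into an element of $X_{n-1}$, which is precisely the structure the dual constraints are designed to exploit.
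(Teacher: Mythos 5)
Your proof is correct, but it takes a genuinely different route from the paper's for the validity part. The paper establishes validity by exhibiting explicit multipliers $(\alpha,\beta^1,\beta^2,\gamma)$ built from the dual solution, verifying case by case that they lie in the projection cone $W$ of \eqref{eq:proj_cone:1}--\eqref{eq:proj_cone:2}, and then reading off the $ab$-cut as a projected inequality of $LAJ(D,P)^+$; validity for the QAP is inherited because the projection contains all integral points. You instead bypass the projection cone entirely and verify validity directly on integral solutions: for $x_{ab}=0$ the right-hand side is nonpositive because $\delta\geq 0$, and for $x_{ab}=1$ the complementary block of $x$ lies in $X_{n-1}$, so summing the dual constraints $-\beta^1_{lab}-\beta^2_{kab}-\delta_{kl}\leq p_{ak}d_{bl}$ against the nonnegative $x_{kl}$ collapses the $\beta$-terms via the row and column sums and yields exactly the cut. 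Both arguments are sound, and the separation statements coincide (the right-hand side of the cut at $(x^\star,z^\star)$ equals $sep_{dual}$, and weak duality rules out violated cuts when $sep\leq z^\star_{ab}$). Your version is shorter and self-contained, and it makes the combinatorial reason for validity transparent; the paper's version is longer but buys the structural fact that every $ab$-cut is a projected Adams--Johnson inequality, which is what justifies viewing these cuts as a tractable subclass of the projection cone and underpins the surrounding discussion of Thm.~\ref{theorem:AJbesserXY}. One small point of care in your write-up: validity should be stated for all feasible $(x,z)$ of $XY(D,P)$, not only for $z_{ab}$ fixed to the substituted value; this is immediate since the cut only bounds $z_{ab}$ from below and every feasible $z_{ab}$ dominates the substituted value, but it is worth saying explicitly.
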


\begin{proof}
	Let $a,b\in\setval{1}{n}$, $(x^\star,z^\star)\in LXY(D,P)$ a valid point to the linearization and consider a dual solution $(\beta^1_{lab}, \beta^2_{kab}, \delta_{kl})\in\Rbb^{2(n-1)+(n-1)^2}$ to 
	\begin{align*}
		sep = \min \sum_{\substack{k,l=1\\k\neq a, l\neq b}}^n p_{ak}d_{bl}x_{kl}&\\
		\text{s.t.}\quad-\sum_{\substack{k=1\\k\neq a}}^nx_{kl}&=-x^\star_{ab}\quad\forall l\in\setval{1}{n}, l\neq b &(\beta^1_{lab})\\
		-\sum_{\substack{l=1\\l\neq b}}^nx_{kl}&=-x^\star_{ab}\quad\forall k\in\setval{1}{n}, k\neq a &(\beta^2_{kab})\\
		x_{kl}&\geq 0\quad\forall k,l\in\setval{1}{n}, k\neq a, l\neq b\\
		-x_{kl}&\geq -x^\star_{kl}\quad\forall k,l\in\setval{1}{n}, k\neq a, l\neq b&(\delta_{kl})
	\end{align*}
	The dual problem is
	\begin{align*}
		\max -x^\star_{ab}\left(\sum_{\substack{l=1\\l\neq b}}^n\beta^1_{lab} + \sum_{\substack{k=1\\k\neq a}}^n\beta^2_{kab}\right) &- \sum_{\substack{k,l=1\\k\neq a, l\neq b}}x^\star_{kl}\delta_{kl}\\
		\text{s.t.}\quad-\beta^1_{lab} - \beta^2_{kab}&\leq p_{ak}d_{bl}+\delta_{kl}\quad\forall k,l\in\setval{1}{n}, k\neq a, l\neq b\\
		\delta_{kl}&\geq 0 \quad\forall k,l\in\setval{1}{n}, k\neq a, l\neq b
	\end{align*}
	By combining the dual solution $\beta^1_{lab}, \beta^2_{kab}, \delta_{kl}$ with the following multipliers
	\begin{itemize}
		\item $\alpha_{ij} = 
			\begin{cases}
				-1&\text{if }(i,j) = (a,b)\\
				0&\text{otherwise}
			\end{cases}
			\quad\forall i,j \in\setval{1}{n}$
		\item $\beta^1_{lij} = 0\quad\forall l,i,j\in\setval{1}{n}, l\neq j, l\neq b, (i,j) \neq (a,b)$
		\item $\beta^2_{kij} = 0\quad\forall k,i,j\in\setval{1}{n}, k\neq a, k \neq a, (i,j) \neq (a,b)$
		\item $\beta^1_{lij} + \beta^2_{kij} = \delta_{ij}\quad\forall i,j\in\setval{1}{n}, i\neq a, j\neq b, (k,l) = (a,b)$
		\item $\gamma_{ijkl} = 
			\begin{cases}
				-p_{ak}d_{bl} - \delta_{kl}&\text{if }(i,j) = (a,b)\\
				p_{ai}d_{bj} + \delta_{ij}&\text{if }(k,l) = (a,b)\\
				0&\text{otherwise}
			\end{cases}
			\quad
			\begin{aligned}
				\forall i,j,k,l\in\setval{1}{n}\\
				k\neq i, l\neq j
			\end{aligned}
			$
		\item $\beta^1_{lab}, \beta^2_{kab}$ as above in the dual solution
	\end{itemize}
	we obtain a collection $(\alpha,\beta^1,\beta^2,\gamma)$ that is contained in the projection cone~$W$ as shown by verification of \eqref{eq:proj_cone:1}:
	\begin{enumerate}
		\item Let $(i,j)=(a,b), k\neq a, l\neq b$:
			\begin{equation*}
				p_{ak}d_{bl}\alpha_{ab} = -p_{ak}d_{bl} = \delta_{kl} -p_{ak}d_{bl} - \delta_{kl} = \beta^1_{bkl} + \beta^2_{akl} + \gamma_{abkl}
			\end{equation*}
		\item Let $(k,l) = (a,b), i\neq a, j\neq b$:
			\begin{equation*}
				p_{ia}d_{jb}\alpha_{ij} = 0 = -p_{ai}d_{bj} - \delta_{ij} + p_{ai}d_{bj} + \delta_{ij} \leq \beta^1_{jab} + \beta^2_{iab} + \gamma_{ijab}
			\end{equation*}
		\item Let $i\neq a, j\neq b, k\neq a, l\neq b$:
			\begin{equation*}
				p_{ik}d_{jl}\alpha_{ij} = 0 \leq 0 + 0 + 0 = \beta^1_{jkl} + \beta^2_{ikl} + \gamma_{ijkl}
			\end{equation*}
	\end{enumerate}
	Equation \eqref{eq:proj_cone:2} of the projection is trivially valid. The resulting projected inequality is equivalent to the $ab$-cut
	\begin{align*}
		\sum_{i,j=1}^n\alpha_{ij}z_{ij} - \sum_{i,j=1}^n \left(\sum_{\substack{l=1\\l\neq j}}^n \beta^1_{lij} + \sum_{\substack{k=1\\k\neq i}}^n\beta^2_{kij}\right)x_{ij} & =\\
		-z_{ab} - \left(\sum_{\substack{l=1\\l\neq b}}^n \beta^1_{lab} + \sum_{\substack{k=1\\k\neq a}}^n\beta^2_{kab}\right)x_{ab} - \sum_{\substack{i,j=1\\i\neq a, j\neq b}}^n\left(\beta^1_{bij}+\beta^2_{aij}\right)x_{ij}& =\\
		-z_{ab} - \left(\sum_{\substack{l=1\\l\neq b}}^n \beta^1_{lab} + \sum_{\substack{k=1\\k\neq a}}^n\beta^2_{kab}\right)x_{ab} - \sum_{\substack{i,j=1\\i\neq a, j\neq b}}^n\delta_{ij}x_{ij}& \leq 0.
	\end{align*}
	
	Last, as the inequality equals $-z_{ab} + sep_{dual}(\beta^1_{lab}, \beta^2_{kab}, \delta_{kl}) \leq 0$ it is violated if $sep_{dual}(\beta^1_{lab}, \beta^2_{kab}, \delta_{kl}) > z^\star_{ab}$, and by duality there is no violated inequality if $sep \leq z^\star_{ab}$.
\end{proof}

\begin{remark}
	$ab$-cuts can be viewed as a generalization of \eqref{eq:XYlInequality} both from their proof as well as their interpretation. 
	
	Restricting the dual variable $\delta_{kl}=0\;\forall k,l\in\setval{1}{n}$ in the proof of $ab$-cuts would result in \eqref{eq:XYlInequality}. In general, the multipliers collected in the proof are similar to the ones used in Prop.~\ref{prop:inequalityLContained}, but allow for some variation by means of the $\delta_{kl}$ without searching the complete projection cone $W$.
	
	Using a problem-specific view, $ab$-cuts strengthen \eqref{eq:XYlInequality} by including information about the current fractional solution through the constraint $x_{kl}\leq x^\star_{kl}\;\forall k,l\in\setval{1}{n}, k\neq a, l\neq b$ of the separation problem. Therefore the lower bound on $z_{ab}$ in dependence of $x_{ab}$ reflects the structure of the current fractional solution, \eg,  by excluding fractional assignments where $x^\star_{kl} = 0$ (which obviously is not the case for $l_{ab}$ used in \eqref{eq:XYlInequality}).
\end{remark}

Thm.~\ref{theorem:cuts} states conditions under which a violated $ab$-cut exists, however these depend on the objective value of the separation problem. In the following, we add an a priori requirement that can be checked before solving the separation problem.

\begin{proposition}
\label{prop:cond_on_cuts}
	Let $a,b\in\setval{1}{n}$ and $(x^\star,z^\star)\in LXY(D,P)$ a solution of the relaxed linearization of Xia and Yuan.
	
	There is no $ab$-cut that is violated by $(x^\star, z^\star)$ if any of the following conditions is true:
	\begin{enumerate}
		\item $x^\star_{ab} = 0$, or
		\item $x^\star_{ab} > 0$ and
			\begin{equation*}
				\exists \bar{x}\in\argmin(l_{ab}): \bar{x}_{kl} \leq \frac{x^\star_{kl}}{x^\star_{ab}}\quad\forall k,l\in\setval{1}{n}, k\neq a, l\neq b.
			\end{equation*}
	\end{enumerate}
\end{proposition}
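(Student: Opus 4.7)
The plan is to invoke the sufficient condition from Theorem~\ref{theorem:cuts}: no violated $ab$-cut exists whenever the optimal primal value $sep$ of the separation problem is bounded above by $z^\star_{ab}$. Thus in both cases I need only exhibit a feasible point of the separation LP whose objective is at most $z^\star_{ab}$, combined with the bound $z^\star_{ab} \geq l_{ab}x^\star_{ab}$ given by constraint~\eqref{eq:XYlInequality}.

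For Case~1, if $x^\star_{ab}=0$, the two families of equality constraints $\sum_{k}x_{kl}=0$ and $\sum_{l}x_{kl}=0$ together with nonnegativity force $x_{kl}=0$ as the unique feasible point, so $sep = 0$. On the other hand constraint~\eqref{eq:XYlInequality} gives $z^\star_{ab}\geq l_{ab}\cdot 0 = 0$, so $sep \leq z^\star_{ab}$ and Theorem~\ref{theorem:cuts} precludes any violated $ab$-cut.

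For Case~2, define the candidate $\tilde{x}_{kl} := x^\star_{ab}\,\bar{x}_{kl}$ for $k\neq a$, $l\neq b$, where $\bar{x}\in\argmin(l_{ab})$ is the witness given by hypothesis. A quick check confirms feasibility: each row/column sum equals $x^\star_{ab}\cdot 1 = x^\star_{ab}$ (since $\bar{x}\in X_{n-1}$), nonnegativity is immediate, and the upper bound $\tilde{x}_{kl}\leq x^\star_{kl}$ is precisely the assumed inequality $\bar{x}_{kl}\leq x^\star_{kl}/x^\star_{ab}$ rescaled by $x^\star_{ab}>0$. The objective value at $\tilde{x}$ is $\sum_{k\neq a,\,l\neq b} p_{ak}d_{bl}\tilde{x}_{kl} = x^\star_{ab}\sum_{k\neq a,\,l\neq b} p_{ak}d_{bl}\bar{x}_{kl} = x^\star_{ab}\,l_{ab}$ by the choice of $\bar{x}$. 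Therefore $sep \leq x^\star_{ab}\,l_{ab} \leq z^\star_{ab}$, where the last inequality is again~\eqref{eq:XYlInequality}, and Theorem~\ref{theorem:cuts} finishes the case.

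There is no substantive obstacle; the only subtlety is noticing that the condition $\bar{x}_{kl}\leq x^\star_{kl}/x^\star_{ab}$ is exactly what is needed so that the natural scaled primal-feasible point $x^\star_{ab}\bar{x}$ respects the upper-bound constraints $x_{kl}\leq x^\star_{kl}$ in the separation LP, and to recognise that \eqref{eq:XYlInequality} supplies the $z^\star_{ab}\geq l_{ab}x^\star_{ab}$ ingredient for free. The division by $x^\star_{ab}$ in the hypothesis of condition~2 is precisely why condition~1 must be treated separately.
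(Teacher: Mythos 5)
Your proof is correct and follows essentially the same route as the paper: in case~1 the separation LP collapses to the zero solution with $sep=0\leq z^\star_{ab}$, and in case~2 the scaled permutation matrix $x^\star_{ab}\bar{x}$ is feasible with objective $l_{ab}x^\star_{ab}\leq z^\star_{ab}$ by \eqref{eq:XYlInequality}, so Thm.~\ref{theorem:cuts} rules out a violated cut. No differences worth noting.
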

\begin{proof}
	Let $a,b\in\setval{1}{n}$, $(x^\star,z^\star)\in LXY(D,P)$ and $\bar{x}$ as defined. 
	
	If $x^\star_{ab} = 0$ the only valid solution of the separation problem is $x_{kl}=0\;\forall k,l\in\setval{1}{n}, k\neq a, l\neq b$ with objective value $sep = 0 \leq z^\star_{ab}$, therefore no violated $ab$-cut exists.
	
	If $x^\star_{ab} > 0$, then $\tilde{x} = x^\star_{ab}\bar{x}\in x^\star_{ab}X_{n-1}$ is a scaled permutation matrix, and due to the assumptions a valid element of the separation problem with objective value $x^\star_{ab}l_{ab}$. We conclude using \eqref{eq:XYlInequality} that
	\begin{equation*}
		z^\star_{ab}\geq l_{ab}x^\star_{ab} \geq sep,
	\end{equation*}
	wherefore no violated $ab$-cut exists.
\end{proof}

No further polytopal properties of $ab$-cuts are known to date, especially questions whether they are facet-defining or can be lifted further require additional analysis of the polytope. 

\begin{example}[Linearization of Xia and Yuan with cuts yields integral solution]
\label{example:integral_sol_with_cuts}
	Let $n=3$ and consider $D,P$ as defined in Example~\ref{example:strictly_contained}. 
	
	When solving $LXY(D,P)$ we obtain the first fractional solution 
	\begin{align*}
		x^\star &= \frac{1}{4}
		\begin{pmatrix}
			3&0&1\\
			1&3&0\\
			0&1&3
		\end{pmatrix},&
		z^\star&=\frac{1}{72}
		\begin{pmatrix}
			24&0&8\\
			9&18&0\\
			0&6&24
		\end{pmatrix}.
	\end{align*}
	This solution violates $ab$-cuts
	\begin{align*}
		z_{13}&\geq \frac{5}{9}x_{13} - \frac{3}{9}x_{31},&z_{33}&\geq \frac{5}{9}x_{33} - \frac{3}{9}x_{12}.
	\end{align*}
	The latter cut extends $z_{33}\geq l_{33}x_{33}$, as the value of $l_{33}$ assumes $x_{12}=1$ (compare Example~\ref{example:strictly_contained}). In the fractional solution, however, $x^\star_{12}=0$. Therefore, the bound on $z_{33}$ can be strengthened when $x_{12} = 0$, which is done in the $ab$-cut.
	
	After resolving the problem with these cuts, we obtain as second fractional solution
	\begin{align*}
		x^\star &= \frac{1}{5}
		\begin{pmatrix}
			4&1&0\\
			1&2&2\\
			0&2&3
		\end{pmatrix},&
		z^\star&=\frac{1}{90}
		\begin {pmatrix}
			32&6&0\\
			9&12&18\\
			0&12&24
		\end{pmatrix}.
	\end{align*}
	This solution violates $ab$-cuts
	\begin{align*}
		z_{11}&\geq \frac{5}{9}x_{11} - \frac{1}{9}x_{22},& z_{33}&\geq \frac{6}{9}x_{33} - \frac{2}{9}x_{21} - \frac{1}{9}x_{22}.
	\end{align*}	
	Adding these cuts results in a third fractional solution
	\begin{align*}
		x^\star &= \frac{1}{4}
		\begin{pmatrix}
			2&1&1\\
			2&2&0\\
			0&1&3
		\end{pmatrix},&
		z^\star&=\frac{1}{36}
		\begin {pmatrix}
			8&3&5\\
			9&6&0\\
			0&3&12
		\end{pmatrix}.
	\end{align*}
	This solution violates $ab$-cut
	\begin{align*}
		z_{33}&\geq \frac{4}{9}x_{33} - \frac{2}{9}x_{12} - \frac{1}{9}x_{22}.
	\end{align*}	
	After adding this cut, we obtain an optimal integral solution
	\begin{align*}
		x^\star &= 
		\begin{pmatrix}
			0&1&0\\
			1&0&0\\
			0&0&1
		\end{pmatrix},&
		z^\star&=\frac{1}{18}
		\begin {pmatrix}
			0&6&0\\
			9&0&0\\
			0&0&8
		\end{pmatrix}.
	\end{align*}
\end{example}

Example~\ref{example:integral_sol_with_cuts} shows that the inequalities defined in Thm.~\ref{theorem:cuts} may suffice to ensure an integral solution to the relaxed linearization of Xia and Yuan. However, this is not true in general.

	%
	%

\begin{example}[Linearization of Xia and Yuan with cuts does not yield integral solution]
\label{example:no_integral_sol_with_cuts}
	Let $n=4$ and consider coefficient matrices
	\begin{align*}
		D &=
		\begin{pmatrix}
			0&1&2&3\\
			1&0&1&2\\
			2&1&0&1\\
			3&2&1&0
		\end{pmatrix}&
		P = \frac{1}{16}
		\begin{pmatrix}
			0&2&1&1\\
			2&0&2&0\\
			1&1&0&2\\
			2&1&1&0
		\end{pmatrix}.
	\end{align*}
	
	When solving the linearization of Xia and Yuan with this input, we find 21 $ab$-cuts in 4 iterations and obtain the fractional solution
	\begin{align*}
		x^\star &= \frac{1}{2}
		\begin{pmatrix}
			1&1&0&0\\
			1&1&0&0\\
			0&0&1&1\\
			0&0&1&1
		\end{pmatrix},&
		z^\star&=\frac{1}{32}
		\begin {pmatrix}
			7&5&0&0\\
			6&4&0&0\\
			0&0&5&7\\
			0&0&5&8
		\end{pmatrix}.
	\end{align*}
	No further $ab$-cuts violated by this fractional solution can be found.
	
	Note furthermore that $(a,b) = (4,4)$ does not meet the requirements of Prop.~\ref{prop:cond_on_cuts}, yet no violated cuts can be found. For all other combinations of $a,b \in\setval{1}{n}$, the requirements of Prop.~\ref{prop:cond_on_cuts} apply.
\end{example}

The separation of $ab$-cuts can be done by solving the respective matching problem. When used repeatedly in a Branch and Bound approach, one could make use of advanced linear programing techniques such as a warmstart of the separation problem as the dual polytope remains unchanged, only the dual objective function changes.

%
%

\section{Outlook}
\label{sec:outlook}

Using a Lift-And-Project approach, we proved that the linearization of Adams and Johnson provides a tighter linear bound than the linearization of Xia and Yuan. To be precise, its polytope is strictly contained in the one of Xia and Yuan. Subsequently, we used the theoretical insights obtained from the proof to derive a new family of cuts (called $ab$-cuts) that can be used in a Branch and Bound technique to strengthen the linearization of Xia and Yuan. We gave an interpretation of the cuts and their separation, as well as conditions under which no violated $ab$-cuts exist given a fractional solution.

Future work includes practical tests of the Branch and Cut approach, an extended research of the coefficients included in $ab$-cuts (an LP solver returns extremal dual variables, which might not be the best choice) as well as the development of further cut families. Preliminary results show that the $ab$-cuts reduce the gap between linear relaxation and optimal solution by approximately 10\%.

\bibliographystyle{spmpsci}      
\bibliography{literatur}   


\end{document}